\numberwithin{equation}{section}
\newtheorem{theorem}{Theorem}[section]
\newtheorem{proposition}[theorem]{Proposition}
\newtheorem{lemma}[theorem]{Lemma}
\theoremstyle{definition}
\newtheorem{remark}[theorem]{Remark}
\title{Branched covering surface-knots with degree three have the simplifying numbers less than three}
\author{Inasa Nakamura}
\address{Faculty of Electrical, Information and Communication Engineering, Institute of Science and Engineering, Kanazawa University, \newline
Kakumamachi, Kanazawa, 920-1192, Japan}
\email{inasa@se.kanazawa-u.ac.jp}
\subjclass[2010]{Primary 57Q45; Secondary 57Q35}
\keywords{surface-knot; 2-dimensional braid; chart; 1-handle}
\begin{document}

\begin{abstract}
 
A branched covering surface-knot is a surface-knot in the form of a branched covering over a surface-knot. For a branched covering surface-knot, we have a numerical invariant called the simplifying number. We show that branched covering surface-knots with degree three have the simplifying numbers less than three. 
\end{abstract}

\maketitle

\section{Introduction}\label{sec1}

A chart \cite{CKS, Kamada02} is an oriented and labeled graph satisfying certain conditions, and an oriented surface-knot in 4-space is described by a chart on a disk. Let $n$ be a positive integer. 
It is known that for a chart $\Gamma$ on a disk of degree $n \leq 3$, $\Gamma$ is equivalent to a \lq\lq ribbon chart'', which consists of several \lq\lq free edges'' and edges connected with no vertices \cite{Kamada92}. It follows that the unknotting number of $\Gamma$, denoted by $u(\Gamma)\in \mathbb{Z}_{\geq 0}$, which is the minimum number of free edges necessary to add to deform $\Gamma$ to a union of free edges, satisfies $u(\Gamma)\leq 2$ \cite{Kamada99}. 
In this paper, we show a similar result for the simplifying number of  a branched covering surface-knot. A branched covering surface-knot is a surface-knot presented by a chart $\Gamma$ on a diagram of a surface-knot $F$, denoted by $(F, \Gamma)$. The simplifying number of $(F, \Gamma)$, denoted by $u(F,\Gamma) \in \mathbb{Z}_{\geq 0}$, is the minimum number of \lq\lq 1-handles with chart loops" necessary to add to deform $(F, \Gamma)$ to a \lq\lq simplified form" \cite{N5, N6}. 
Our main result is as follows. 
\begin{theorem}\label{thm1}
Let $n$ be a positive integer. Let $(F, \Gamma)$ be a branched covering surface-knot of degree $n$ such that $F$ is connected. 
If $n \leq 3$, then $u(F, \Gamma)\leq 2$. 
\end{theorem}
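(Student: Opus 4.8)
\medskip

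The plan is to reduce the assertion to the disk case recalled in the Introduction and then to realize the needed additions of free edges by attaching $1$-handles carrying chart loops. First I would dispose of the low degrees. For $n=1$ the covering is trivial and $\Gamma$ is empty, so $u(F,\Gamma)=0$. For $n=2$ the structure group is the abelian group $S_2\cong\mathbb{Z}/2$, so every edge of $\Gamma$ carries the single label $\sigma_1$ and $\Gamma$ has neither crossings nor white vertices; the chart is thus a disjoint union of loops and free edges, and a direct simplification shows $u(F,\Gamma)\le 1$ in this case. The essential case is $n=3$, where the structure group $S_3$ is nonabelian and the braid relation $\sigma_1\sigma_2\sigma_1=\sigma_2\sigma_1\sigma_2$, recorded by the white vertices, genuinely enters.

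For $n=3$ the only labels are $1$ and $2$; since these are adjacent, a degree-three chart has no crossings, and its vertices are black vertices (branch points) and degree-six white vertices. I would fix a handle decomposition of the connected surface $F$ in which $\Gamma$ meets the top $2$-cell in a chart on a disk $D$, the $1$-handles of $F$ carrying the gluing (monodromy) data. By Kamada's theorem, the restriction of $\Gamma$ to $D$ is $C$-move equivalent to a ribbon chart, that is, to free edges together with edges carrying no vertices. The point is then to understand how free edges of labels $1$ and $2$ are transported across the $1$-handles of $F$, where their labels are conjugated in $S_3$.

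The heart of the argument is to show that, after attaching at most two $1$-handles with chart loops, every free edge can be gathered, merged, and cancelled, bringing $(F,\Gamma)$ to a simplified form in the sense of \cite{N5, N6}. I would attach one $1$-handle carrying a chart loop of label $\sigma_1$ and one carrying a loop of label $\sigma_2$; sliding free edges over these loops and applying the white-vertex relation lets free edges of conjugate labels be merged and cancelling pairs of branch points be annihilated. The resulting bound $u(F,\Gamma)\le 2$ then mirrors the disk bound $u(\Gamma)\le 2$, with the two handles corresponding to the two generators $\sigma_1,\sigma_2$ of $S_3$.

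The main obstacle I anticipate lies in the nonabelian monodromy of the degree-three cover: when a free edge is slid across a $1$-handle or transported around a handle of $F$, its label is conjugated, and one must verify that these conjugations never force a third handle. This requires a careful bookkeeping of labels by their conjugacy class in $S_3$ (the three transpositions forming one class and the two $3$-cycles another), together with a check that the two chart loops already absorb every conjugation and every instance of the braid relation that arises. Making this accounting precise, rather than the reduction to the disk case, is where the real work is concentrated.

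\medskip
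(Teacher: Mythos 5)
Your treatment of degrees $1$ and $2$ matches the paper, but for the essential case $n=3$ there is a genuine gap at the very first step of your plan: you cannot in general arrange a handle decomposition of $F$ in which $\Gamma$ sits inside the top $2$-cell as a closed chart on a disk. A chart on a surface-knot $F$ of positive genus can be essential in $F$ (chart loops and the edges emanating from white vertices may wind around the handles of $F$), so the restriction $\Gamma\cap D$ is a relative chart with endpoints on $\partial D$, and Kamada's ribbon-chart theorem for degree $\le 3$ applies only to closed charts on a disk (equivalently on $S^2$); it says nothing about such relative charts. This is precisely why the paper does not reduce to the disk case at all: its Key Lemma (Lemma \ref{lem4}) instead slides an end of a $1$-handle along a closed path in $F$ to collect white vertices onto the handle, and the existence of that closed path (Lemma \ref{lem5-3}) is a combinatorial argument about orientations and types of white vertices, not a consequence of the disk result. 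The "careful bookkeeping" you defer to the end is not a routine verification; it is the entire content of Sections \ref{sec4} and \ref{sec5}.

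A second, related gap: your argument is organized around merging and cancelling free edges (branch points), but the case that actually forces the bound $2$ is the one with \emph{no} black vertices, hence no free edges at all. The paper shows that when black vertices are present one handle suffices ($u\le 1$, via Theorem \ref{thm4}); when they are absent, after collecting all white vertices one is left with $h(\sigma_1,(\sigma_2\sigma_1\sigma_1\sigma_2)^n)+h(\sigma_2,e)$, and reducing $n$ to $0$ requires the exchange moves of Lemma \ref{lem5} together with the identity $h(\sigma_2,e)\sim h(e,\sigma_2)$ for a trivial $1$-handle (Lemma \ref{lem7}) to deal with odd $n$. Nothing in your proposal addresses this normal form or the parity issue, so as written the argument does not establish $u(F,\Gamma)\le 2$ in the hardest case.
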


A branched covering surface-knot is a surface-knot in the form of a branched covering over a surface-knot. A {\it surface-knot} is a closed surface embedded smoothly into the Euclidean 4-space $\mathbb{R}^4$. We assume that surface-knots are oriented. 
Let $I \times I$ be a 2-disk for an interval $I$. 
For a surface-knot $F$, let $N(F)=I \times I \times F$ be a tubular neighborhood of $F$ in $\mathbb{R}^4$. A closed surface $S$ embedded in $N(F)$ is called a {\it branched covering surface-knot over $F$ of degree $n$} if it satisfies the following two conditions. 

\begin{enumerate}[(1)]
\item
The restriction $p|_{S} \,:\, S \rightarrow F$ is a branched covering map of degree $n$, where $p\,:\, N(F) \to F$ is the natural projection. 

\item The number of points consisting $S \cap p^{-1}(x)$ is $n$ or $n-1$ for any point $x \in F$.
\end{enumerate}
Take a base point $x_0$ of $F$. 
We say that two branched covering surface-knots over $F$ of degree $n$ are {\it equivalent} if there is a fiber-preserving ambient isotopy of $N(F)=I \times I \times F$ rel $p^{-1}(x_0)$ which carries one to the other. 
In this paper, we assume that the base surface-knot $F$ is connected. 

We consider a branched covering surface-knot $S$ of degree 2, as the simplest non-trivial case. We explain the chart presentation. 
Consider the singular set $\mathrm{Sing}(p_1(S))$ of the image of $S$ by the projection $p_1$ to $I \times N(F)$. Perturbing $S$ if necessary, we assume that $\mathrm{Sing}(p_1(S))$ consists of double point curves and isolated branch points. The image of $\mathrm{Sing}(p_1(S))$ by the projection to $F$ forms a finite graph $\Gamma$ on $F$ such that the degree of a vertex of $\Gamma$ is $1$. A chart is a graph obtained from $\Gamma$ by labelling each edge by the number $1$ and assigning an orientation to each edge by a certain rule. See Section \ref{sec2} for details. 

We explain the simplifying operation. 
Let $B^2$ be a unit 2-disk and let $I=[0,1]$. 
A {\it 1-handle} is a 3-ball $h=B^2 \times I$ smoothly embedded in $\mathbb{R}^4$ such that $h \cap F=(B^2 \times \partial I) \cap F$.  
The surface-knot obtained from $F$ by a {\it 1-handle addition} along $h$ is the surface
\[
(F-(\mathrm{Int} B^2 \times \partial I)) \cup (\partial B^2 \times I),
\]
which is denoted by $F+h$. We assume that $F+h$ is orientable, and we give $F+h$ the orientation induced from that of $F$. 
For a 1-handle $h=B^2 \times I$, we call $B^2 \times \{0\}$ and $B^2 \times \{1\}$ the {\it ends} of $h$. We take the {\it oriented core} (or simply the {\it core}) as an oriented path $\rho(t), t \in I$ with the orientation of $I$ in $\partial B^2 \times I \subset h$ such that $\rho(t) \in \partial B^2 \times \{t\}$ for $t \in I$. When both ends of $h$ are on a 2-disk $E$ in $F$, we determine the {\it core loop} of $h$ as the oriented closed path in $F+h$ obtained from the oriented core $\rho(t), t \in [0,1]$ by connecting the initial point $\rho(0)$ and the terminal point $\rho(1)$ by a simple arc in $E$, and we assign the core loop the induced orientation. We say that a 1-handle $h$ attached to a surface-knot $F$ is {\it trivial} if there is a 3-ball $B^3$ satisfying $(h \cup F) \cap B^3=h$. 
 
Let $(F, \Gamma)$ be a branched covering surface-knot of degree 2. We call a connected segment of an edge an {\it arc}. We call an edge/arc of a chart a {\it chart edge/arc}, and we call a chart edge connected with no vertices a {\it chart loop} or simply a {\it loop}. A chart edge is called a {\it free edge} if its end points are a pair of vertices of degree 1. 
Now, the chart $\Gamma$ of degree 2 consists of several free edges and chart loops. We consider an equivalence deformation as the local exchange of two parallel horizontal chart arcs with opposite orientations into two parallel  vertical chart arcs with induced orientations. This deformation is called a CI-M2 move; see Section \ref{sec2} for precise definition and other equivalent modifications. When we have a 1-handle with a chart loop $h(\sigma_1, e)$ in a neighborhood of an arc of a chart loop, applying a CI-M2 move and sliding an end of the 1-handle,  $h(\sigma_1, e)$ eliminates the chart loop, as illustrated in the first row of Fig. \ref{fig1}. When the orientation of the chart loop is opposite, then we turn around the 1-handle to make the loop along the core loop have the opposite orientation, as illustrated in the second row of Fig. \ref{fig1}. Repeating this operation, by an addition of $h(\sigma_1, e)$, $(F, \Gamma)$ of degree 2 deforms to 
\begin{equation}\label{eq-1-1}
(F, \Gamma_0) +h(\sigma_1, e),
\end{equation}
where $\Gamma_0$ is a chart consisting of several (maybe no) free edges. 

\begin{figure}[ht]
\centering
\includegraphics*[height=6cm]{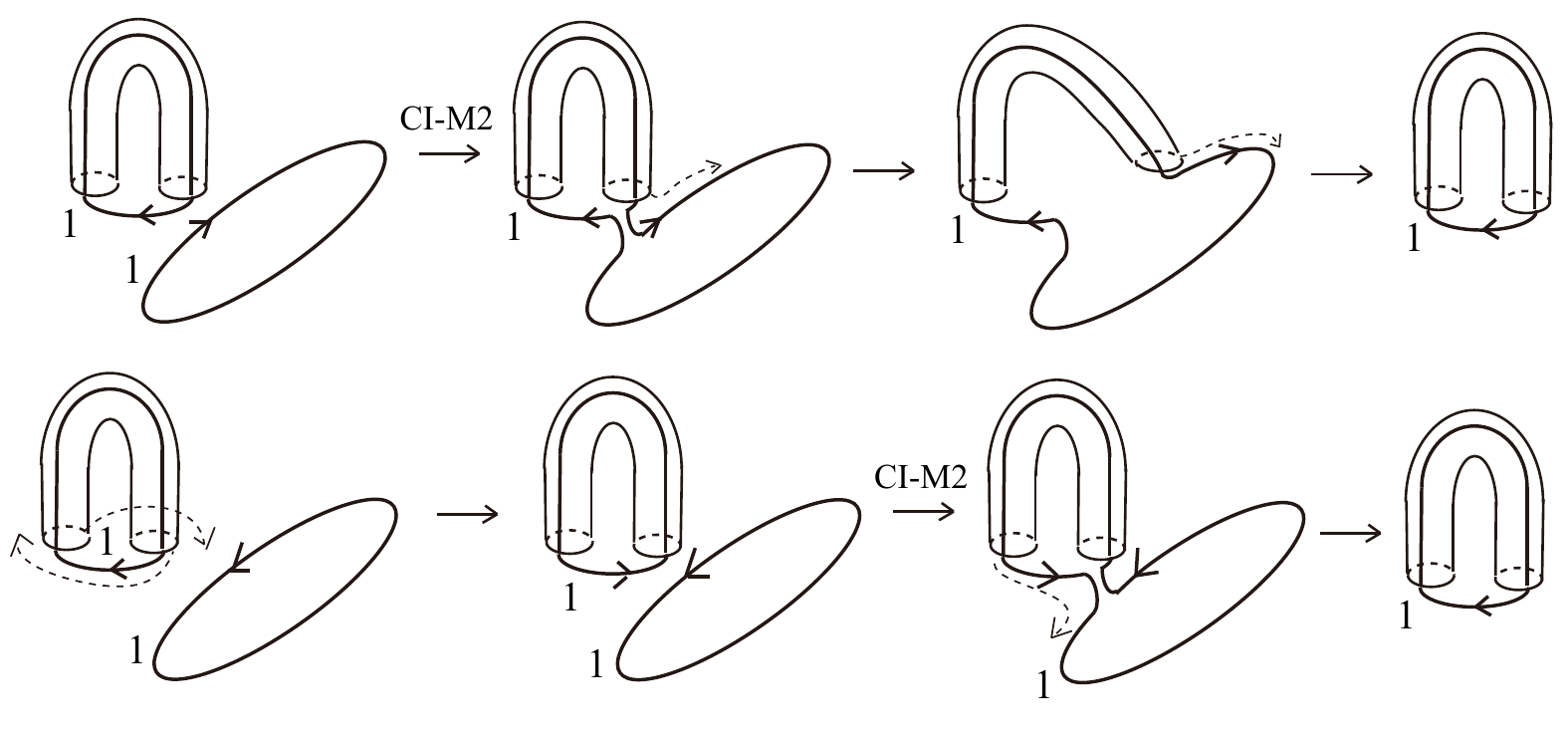}
\caption{The 1-handle with a chart loop $h(\sigma_1, e)$ eliminates a chart loop with the same label.}
\label{fig1}
\end{figure}

We call the form (\ref{eq-1-1}) a {\it simplified form}. Thus the simplifying number, denoted by $u(F, \Gamma)$, satisfies
\[
u(F, \Gamma) \leq 1
\]
for a branched covering surface-knot of degree 2. 
 
In this paper, we focus on branched covering surface-knots of degree 3. 
A {\it surface diagram} of a surface-knot $F$ is the image of $F$ in $\mathbb{R}^3$ by a generic projection, equipped with over/under information on sheets along each double point curve. 
A finite graph $\Gamma$ on a surface diagram $D$ is called a {\it chart} of degree $3$ if 
it satisfies 
the following conditions. 

\begin{enumerate}[(1)]
\item
The intersection of $\Gamma$ and the singular set of $D$ consists of a finite number of transverse intersection points of edges of $\Gamma$ and double point curves of $D$, which form vertices of degree $2$.

  \item Every edge of $\Gamma$ is oriented and labeled by an element of 
       $\{1,2\}$. 
  \item Every vertex has degree $1$, $2$, or $6$, and the edges connected to each vertex satisfy one of the following conditions.        
       \begin{enumerate}
       \item  The adjacent edges around a vertex of degree $1$ or $6$  
have labels and orientations as in Fig. \ref{fig2}.  
We depict the vertices as shown in Fig. \ref{fig2}, and we call a vertex of degree 1 a {\it black vertex}, and we call a vertex of degree 6 a {\it white vertex}.
\item
The adjacent edges of a vertex of degree 2 have labels and orientations as in Fig. \ref{fig3}. 
\end{enumerate}
 \end{enumerate}

\begin{figure}[ht] 
\includegraphics*{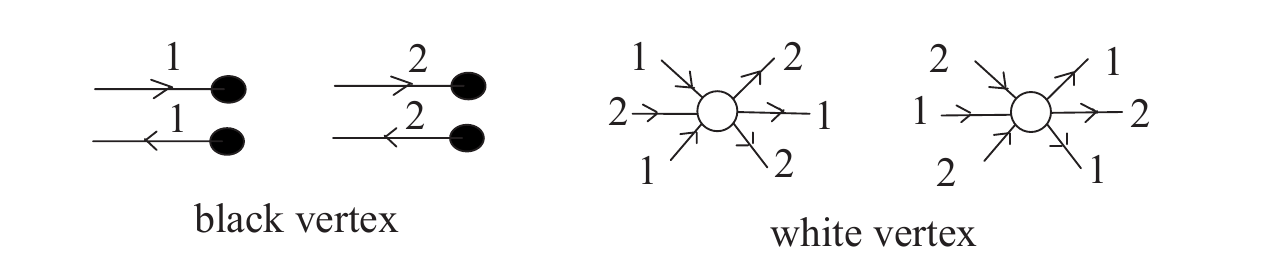}
\caption{Vertices of degree 1 or 6 in a chart of degree 3.}
\label{fig2}
\end{figure}

\begin{figure}[ht]
\centering
\includegraphics*[height=1.5cm]{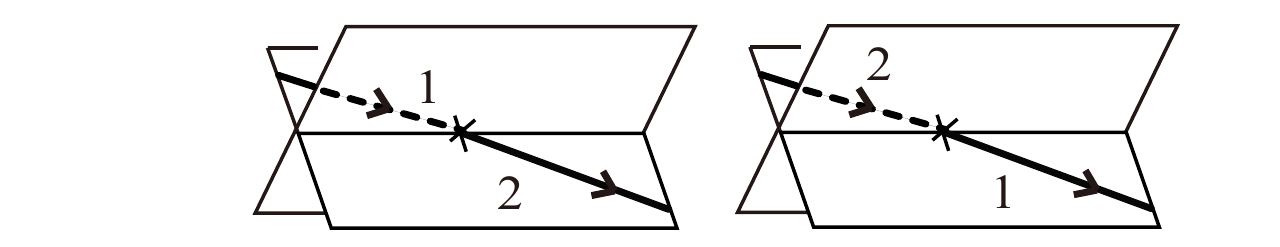}
\caption{A vertex of a degree 2 in a chart of degree 3. For simplicity, we omit the over/under information of each sheet.}
\label{fig3}
\end{figure}

 We regard edges connected with vertices of degree 2 as one edge. 
 Similarly to the case of charts of degree 2, a {\it free edge} is a chart edge whose end points are a pair of black vertices, and a {\it chart loop} (or simply a {\it loop}) is a chart edge connected with no vertices or a chart edge with vertices of degree 2. 
 A chart is said to be {\it empty} if it is an empty graph. 

 We consider a 1-handle on which a chart is drawn, attached to a fixed 2-disk $E$, where there are no chart edges nor vertices except those of the attached 1-handle. 
 We denote by $h(\sigma_i, e)$ $(i=1, 2)$ a 1-handle with a chart loop along the core loop with the orientation of the core loop and whose arc in $E$ has the label $i$. And we denote by $h(e,e)$ a 1-handle with an empty chart. We use the same notation $h$ for any 1-handle, and further, we do not distinguish the frame of a 1-handle. 

For a branched covering surface-knot $(F, \Gamma)$ and a finite number of 1-handles with chart loops  $h(\sigma_{i_k}, e)$ attached to a fixed 2-disk $E$, where there are no chart edges nor vertices except those of the attached 1-handles, we denote the branched covering surface-knot which is the result of the 1-handle addition by $(F, \Gamma) + \sum_{k} h(\sigma_{i_k},e)$. 
By \cite{N5}, for the degree 3 case, we have the following result. 
See \cite{N6} (see also Remark \ref{rem-2}) for the same notations and terminologies we use here. 

\begin{theorem}[{\cite[Theorem 1.6]{N5}}] \label{thm2}
Let $(F, \Gamma)$ be a branched covering surface-knots of degree 3. 
By an addition of finitely many 1-handles in the form $h(\sigma_1, e)$, $h(\sigma_2, e)$ or $h(e,e)$, to appropriate places in $F$, $(F, \Gamma)$ is deformed to 
\begin{equation}\label{eq1-1}
(F, \Gamma_0) +\sum_{k} h(\sigma_{i_k}, e)+\sum h(e,e),
\end{equation}
where $i_k \in \{1, 2\}$, and $\Gamma_0$ is a chart consisting of several (maybe no) free edges. 
\end{theorem}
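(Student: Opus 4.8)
The plan is to reach the normal form \eqref{eq1-1} in two stages: first eliminate every white vertex (degree-6 vertex) of $\Gamma$ by chart moves, reducing $\Gamma$ to a \emph{ribbon chart} consisting only of free edges and chart loops; and then eliminate the chart loops one at a time by attaching $1$-handles carrying matching chart loops, exactly as in the degree-2 procedure illustrated in Fig.~\ref{fig1}.

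For the first stage I would argue locally on the surface diagram $D$. Away from the double point curves, a coordinate disk on $F$ carries an honest planar degree-3 chart, so around each white vertex the six incident edges alternate in label $1,2,1,2,1,2$ as in Fig.~\ref{fig2} and encode the relation $\sigma_1\sigma_2\sigma_1=\sigma_2\sigma_1\sigma_2$. Using CI-moves I would slide all white vertices into a single coordinate disk $E_0\subset F$ disjoint from the double point curves, each time checking that an edge may be pushed through a degree-2 vertex according to the local rule of Fig.~\ref{fig3}. Inside $E_0$ the problem is purely planar, and I would invoke the known fact \cite{Kamada92} that a degree-3 chart on a disk is CI-equivalent to a ribbon chart; the pairwise cancellation of white vertices of opposite sign is the mechanism. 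Whenever such a cancellation forces an edge across a loop of $F$ that bounds no disk in $F$, I would attach an empty $1$-handle $h(e,e)$ to supply the missing disk, which is the source of the $\sum h(e,e)$ term in \eqref{eq1-1}.

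After the first stage $\Gamma$ has become a ribbon chart: finitely many free edges, whose union is $\Gamma_0$, together with finitely many chart loops of labels $1$ and $2$ (possibly carrying degree-2 vertices where they meet double point curves). For the second stage I would remove each loop by the operation of Fig.~\ref{fig1}: attach $h(\sigma_i,e)$, with $i$ the label of the loop, near an arc of the loop lying in a disk $E$ free of other chart structure, apply a CI-M2 move, and slide a handle end so that the handle loop and the chart loop cancel, turning the handle around first when the orientations disagree. In a neighborhood of a single loop the chart is locally of degree at most two, so the degree-2 argument recalled in the introduction applies verbatim, the slide carrying any degree-2 vertices of the loop across the double point curves by Fig.~\ref{fig3}. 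Each loop thus becomes one handle $h(\sigma_{i_k},e)$, and iterating over all loops leaves $\Gamma_0$ a union of free edges, yielding \eqref{eq1-1}.

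The main obstacle is the first stage, precisely the interplay of white-vertex cancellation with the global topology of $F$ and the double point curves of $D$. Two points need care: verifying that every CI-move used to collect the white vertices survives passage through the degree-2 vertices, so that no double point curve blocks a move; and tracking which nontrivial loops of $F$ must be crossed during cancellation, so that exactly the necessary empty handles $h(e,e)$ are introduced and no white vertex remains stranded. Once this bookkeeping is settled, the reduction to the planar degree-3 theory and the loop-by-loop handle absorption are routine.
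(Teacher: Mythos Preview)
Your Stage~2 is correct and matches the paper's treatment of chart loops. The difficulty is Stage~1: the reduction to a ribbon chart via Kamada's planar theorem is not justified, and the proposed use of $h(e,e)$ is misconceived.

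First, there is no mechanism for ``sliding all white vertices into a single coordinate disk $E_0$''. CI-moves are local replacements inside disks; they alter the combinatorics of the chart but do not transport a white vertex along an arbitrary path on $F$. A white vertex comes with six incident edges in a rigid local pattern; to drag $w$ into $E_0$ you must drag those edges with it, and if they wind around handles of $F$ or through other vertices no CI-move accomplishes this. Kamada's theorem works on a disk precisely because every edge there is homotopically trivial. Indeed the paper only obtains the ribbon-chart conclusion (Theorem~\ref{thm4}) under the extra hypothesis that $\Gamma$ has black vertices; without them it is not claimed, and the proof of Theorem~\ref{thm3} in that case genuinely uses the added $1$-handles to dispose of white vertices.

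Second, attaching $h(e,e)$ does not ``supply the missing disk'' for a nontrivial loop of $F$: a $1$-handle \emph{adds} genus. If $\gamma\subset F$ is homotopically nontrivial, it remains so in $F+h(e,e)$, so empty handles cannot enable the cancelling isotopy you describe.

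The paper's argument is different in kind. Rather than cancelling white vertices on $F$, it \emph{collects} them onto the added $1$-handles $h(\sigma_i,e)$ (Lemma~\ref{lem2}): apply a CI-M2 between the loop on $h(\sigma_i,e)$ and a non-middle arc of a white vertex $w$, slide an end of the handle, and $w$ is absorbed onto the handle in the standard position of Fig.~\ref{fig9}. Continuing along the outgoing diagonal edge one picks up further white vertices; when the edge meets the next vertex at a middle arc, a fresh handle is introduced. Once every white vertex sits on some handle, adjacent pairs with opposite orientation cancel by a CI-M3$'$ move and two CI-M1 moves (Fig.~\ref{fig12}); when orientations agree, Lemma~\ref{lem3} shows that re-collecting onto a new handle reverses the relevant orientations, after which the same cancellation applies. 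The missing idea in your proposal is exactly this: the $1$-handles with chart loops are not merely loop-absorbers appended at the end, they are the devices on which the white vertices are gathered and eliminated.
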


We remark that the presentation (\ref{eq1-1}) is well-defined: Since $\Gamma_0$ is a disjoint union of free edges, (\ref{eq1-1}) does not depend on the place where 1-handles are attached. 
We call the form (\ref{eq1-1}) a {\it simplified form}. The {\it simplifying number} of $(F, \Gamma)$, denoted by $u(F, \Gamma)$, is the minimum number of 1-handles in the form $h(\sigma_1, e)$, $h(\sigma_2, e)$ or $h(e,e)$ necessary to deform $(F, \Gamma)$ to a simplified form. 
We further remark that in \cite{N5}, we gave two notions of the simplifying number: the weak simplifying number $u_w(F, \Gamma)$ and the simplifying number $u(F, \Gamma)$, but they are the same notion for the degree 3 case, where we have no vertices of degree 4. See \cite{N5,N7,N6} for the investigations of simplifying operations and the upper estimates of the (weak) simplifying numbers.  

Together with the above argument for the case of degree 2, Theorem \ref{thm1} is reduced to Theorem \ref{thm3} as follows. 

\begin{theorem}\label{thm3}
Let $(F, \Gamma)$ be a branched covering surface-knot of degree $3$. 
Then $u(F, \Gamma)\leq 2$. Further, if $\Gamma$ has a positive number of black vertices, then $u(F, \Gamma) \leq 1$. 
\end{theorem}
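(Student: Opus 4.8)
The plan is to construct a simplification of $(F,\Gamma)$ directly, refining the reduction behind Theorem~\ref{thm2} so that at most one $1$-handle of each label survives. The first step is to reduce $\Gamma$ by chart moves alone (which cost no handles) to a chart consisting only of free edges and chart loops, with no white vertices: locally on the surface diagram a degree-$3$ chart is of ribbon type, so each white vertex of degree $6$, encoding the braid relation $\sigma_1\sigma_2\sigma_1=\sigma_2\sigma_1\sigma_2$, can be removed, leaving free edges together with (possibly essential) loops of labels $1$ and $2$. The second ingredient is the basic handle mechanism already used in degree $2$: a single handle $h(\sigma_i,e)$ eliminates any chart loop of label $i$ by bringing its core loop alongside an arc of the given loop, applying a CI-M2 move, and sliding a handle end as in Fig.~\ref{fig1} (turning the handle around first when the orientations disagree, as in the second row of that figure). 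Since $F$ is connected, the handle can be slid to reach every label-$i$ loop in turn, so one handle $h(\sigma_i,e)$ clears all loops of label $i$.

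Granting the reduction above, the bound $u(F,\Gamma)\le 2$ is immediate: there are only the two labels $1$ and $2$, so introducing one handle $h(\sigma_1,e)$ and one handle $h(\sigma_2,e)$ clears all loops and leaves the simplified form $(F,\Gamma_0)+h(\sigma_1,e)+h(\sigma_2,e)$, where $\Gamma_0$ is the union of free edges; unused handles are simply omitted, and no empty handle $h(e,e)$ is needed.

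For the sharper bound when $\Gamma$ has a black vertex, I would exploit that a black vertex is a branch point of the covering, where the two sheets governed by its label come together. My expectation is that sliding the chart loop of a single handle \emph{through} such a branch point changes the handle's effective label, realizing the conjugacy $\sigma_2=(\sigma_1\sigma_2)\,\sigma_1\,(\sigma_1\sigma_2)^{-1}$ that holds in both $S_3$ and $B_3$. Using the black vertex as this pivot, one handle $h(\sigma_1,e)$ can absorb loops of label $2$ as well as those of label $1$, giving $u(F,\Gamma)\le 1$. Because every free edge terminates in black vertices, this argument covers all cases with $\Gamma_0\neq\varnothing$; the only configuration that genuinely needs two handles is a chart consisting of loops alone in which both labels occur.

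I expect the main obstacle to be twofold. First, the white-vertex elimination must be carried out globally on the surface-knot $F$, not merely on a disk as in the cited ribbon reduction; one must check that no white vertex is forced to survive by the topology of $F$, or else absorb any survivors into the handles. Second, and more delicate, is verifying the label-change of a handle loop across a black vertex with all labels and orientations matching the conventions of Figs.~\ref{fig2} and \ref{fig3}; a careful case analysis on the six edges incident to each white vertex and on the branch-point configuration, together with the orientation-reversal device of Fig.~\ref{fig1}, will be the technical heart of the argument.
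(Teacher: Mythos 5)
Your argument has two genuine gaps, one in each half of the theorem. The first and more serious one is the opening claim that chart moves alone reduce any degree-$3$ chart on $F$ to free edges and loops with no white vertices. The ribbon reduction you invoke is Kamada's theorem for charts on a disk (or $2$-sphere); on a surface-knot of higher genus it fails in general, and the paper only proves such a reduction (Theorem \ref{thm4}) under the hypothesis that $\Gamma$ has a black vertex. You flag this yourself as ``the main obstacle,'' but flagging it is not resolving it: in the no-black-vertex case the entire difficulty of the theorem lives here. The paper's route is to \emph{collect} the surviving white vertices onto an added $1$-handle, normalizing it to $h(\sigma_1,(\sigma_2\sigma_1\sigma_1\sigma_2)^n)$, and then to cancel the white vertices in pairs by trading chart data back and forth with the second handle $h(\sigma_2,e)$ (Lemma \ref{lem5}), using the core/cocore exchange $h(\sigma_2,e)\sim h(e,\sigma_2)$ of Lemma \ref{lem7} to fix the parity when $n$ is odd. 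None of that machinery, nor a substitute for it, appears in your proposal, so the bound $u(F,\Gamma)\le 2$ is not established.

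The second gap is the mechanism you propose for $u(F,\Gamma)\le 1$: sliding a handle's chart loop ``through'' a black vertex to change its effective label via the conjugacy $\sigma_2=(\sigma_1\sigma_2)\sigma_1(\sigma_1\sigma_2)^{-1}$. There is no C-move or Roseman move that passes a loop through a black vertex in this way, and you give no justification for it. The point you are missing is simpler and does not require relabelling any handle: once Theorem \ref{thm4} gives a chart of free edges and loops, a free edge of label $i$ can itself eliminate every chart loop of label $i$ by a CI-M2 move and an ambient isotopy (Fig.~\ref{fig16}), exactly as the handle $h(\sigma_j,e)$ eliminates loops of label $j$ via Lemma \ref{lem1}. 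So one added handle of the label \emph{not} carried by some free edge suffices, which is the paper's argument. Your closing remark that only all-loop charts with both labels need two handles also overlooks that the genuinely two-handle case is the one with white vertices and no black vertices, not the all-loop case.
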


In order to show the latter part of Theorem \ref{thm3}, we show the following theorem. The case when $F$ is a 2-sphere is shown by Kamada \cite{Kamada92}. 

\begin{theorem}\label{thm4}
Let $(F, \Gamma)$ be a branched covering surface-knot of degree $3$ such that $\Gamma$ has a positive number of black vertices. Then, $\Gamma$ is equivalent to a chart consisting of free edges and chart loops. 
\end{theorem}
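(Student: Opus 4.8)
The plan is to induct on the number $w(\Gamma)$ of white vertices of $\Gamma$, using one black vertex as a seed to delete white vertices one at a time. For the base case $w(\Gamma)=0$, every edge of $\Gamma$ either is a closed loop or has both of its endpoints at black vertices (vertices of degree $2$ being transparent, since we regard edges through them as one edge), so each component of $\Gamma$ is a free edge or a chart loop; after removing any nugatory intersections of edges with the double point curves of $D$ by CI-moves, $\Gamma$ already has the desired form. So I would assume $w(\Gamma)\ge 1$ and use the hypothesis to fix a black vertex $b$.

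For the inductive step I would first bring $b$ next to a chosen white vertex $v$. Since $F$ is connected and the C-moves are local, I would isotope the edge carrying $b$ across $F$ --- pushing it over the double point curves of $D$ by the moves that create and cancel degree-$2$ vertices --- until the label-$i$ edge terminating at $b$ runs alongside a label-$i$ edge incident to $v$. The six edges around $v$ carry labels $1,2,1,2,1,2$ and realize the relation $\sigma_1\sigma_2\sigma_1=\sigma_2\sigma_1\sigma_2$; feeding the black vertex $b$ into one of them lets me dissolve $v$ by a sequence of C-moves, the branch point terminating one strand of the triple point and the braid relation then collapsing the remaining hexagon. The outcome is a chart with one fewer white vertex, possibly together with some new black vertices or free edges, which is harmless. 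Iterating drives $w(\Gamma)$ to $0$, and the base case finishes the proof. The local model of the dissolving move, the case in which everything happens inside a disk, is exactly Kamada's theorem for $F=S^2$, which I would invoke rather than re-prove.

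The main obstacle is to make the elimination step work over a general base surface-knot $F$, which is where the argument must go beyond the $S^2$ case. Two points need care. First, routing $b$ to $v$ forces chart edges across the double point curves of $D$ and possibly around the handles of $F$; since in degree $3$ the two labels do not commute, these passages cannot be effected by free commutation and must be arranged explicitly, keeping track of labels and orientations throughout. Second, I must check that the dissolving move strictly decreases the number of white vertices and never produces a new white vertex elsewhere, so that the induction genuinely terminates. The delicate part is the bookkeeping of the orientations of the six edges at $v$ against the orientation of the edge carrying $b$; reducing this to the finitely many local configurations permitted by Fig.~\ref{fig2} is the crux, and it is here that the connectedness of $F$ and the presence of at least one black vertex are used in an essential way.
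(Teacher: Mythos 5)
Your strategy---route a black vertex to the white vertices and annihilate them one at a time by C-moves---is the same as the paper's (Theorem \ref{thm4} is deduced from part (2) of Key Lemma \ref{lem4}; the citation of Lemma \ref{lem5} in the paper's one-line proof is evidently a slip for Lemma \ref{lem4}), but the step you set aside as ``the main obstacle'' is the entire proof, and as you have framed it the step fails. In a degree-$3$ chart a black vertex is the endpoint of an edge carrying a fixed label $i$, and no C-move allows that edge to cross an edge of the other label $j$: the labels $1$ and $2$ are adjacent, so there is no CII-type commutation available. Hence you cannot ``bring $b$ next to a chosen white vertex $v$'' by an isotopy of the edge carrying $b$; the only white vertex you can reach is the one at which that edge actually terminates, and you have no control over whether it arrives along a non-middle arc (where a CIII move applies and eliminates the white vertex) or along a middle arc (where it does not). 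The middle-arc case is where all the work lies: the paper handles it with Lemma \ref{lem5-3}, which builds an alternating sequence of edges $\alpha_2,\beta_2,\alpha_3,\beta_3,\dots$ of labels $i,j,i,j,\dots$ with controlled orientations (after preliminary CI-M3' and CIII reductions of the chart), shows the sequence must terminate at an edge entering a white vertex of the opposite type along a non-middle arc or at a black vertex, and then uses a CI-M2 reconnection so that the elimination can proceed. None of this appears in your proposal; your sentence ``these passages \dots must be arranged explicitly, keeping track of labels and orientations'' is precisely a description of the missing argument, not a substitute for it.

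Two further points. Quoting Kamada's theorem for $F=S^2$ as ``the local model of the dissolving move'' is not legitimate: the local annihilation is the CIII move, which is already one of the defining chart moves, whereas Kamada's theorem is a global statement about charts on the sphere whose proof uses sphere-specific innermost arguments that neither localize to a disk containing a single white vertex nor transfer to a base surface of higher genus. Finally, your induction on $w(\Gamma)$ needs not only that each elimination strictly decreases the number of white vertices, but also that, as long as white vertices remain, some black vertex can still reach one of them; once a black vertex ends up on a free edge it is confined by the label-$j$ edges, so the order in which white vertices are consumed along the black vertex's journey must be built into the argument (as it is in the proof of Lemma \ref{lem4}(2)) rather than checked afterwards.
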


 The paper is organized as follows. In Section \ref{sec2}, we review chart presentations and their equivalent local modifications: C-moves and Roseman moves. 
In Section \ref{sec3}, we show Theorem \ref{thm2}. In Section \ref{sec4}, we show Theorems \ref{thm3} and \ref{thm4}. Section \ref{sec5} is devoted to showing Lemmas.

\section{Chart presentations, C-moves and Roseman moves}\label{sec2}
A branched covering surface-knot is a surface we formerly called a 2-dimensional braid over a surface-knot \cite{N4, N5}; we changed the terminology in \cite{N5}. We introduced a branched covering surface-knot as an extended notion of a 2-dimensional braid or a surface braid over a 2-disk \cite{Kamada92, Kamada02, Rudolph}. A branched covering surface-knot over a surface-knot $F$ is presented by a chart on a surface diagram of $F$ \cite{N4} (see also \cite{Kamada92, Kamada02}). For two branched covering surface-knots of the same degree, they are equivalent if their surface diagrams with charts are related by a finite sequence of ambient isotopies of $\mathbb{R}^3$, and local modifications called C-moves \cite{Kamada92, Kamada02} and Roseman moves \cite{N4} (see also \cite{Roseman}). In this paper, we review these notations for the case of degree 3. 
 
\subsection{Chart presentation}\label{sec2-2}

Let $S$ be a branched covering surface-knot of degree 3 over a surface-knot $F$. 
We explain how to obtain a chart of degree 3 on a 2-disk $E$ in a surface diagram $D$ of $F$ which does not intersect with singularities of $D$. 
We identify $E$ with a 2-disk $E \subset F$ whose projected image is $E \subset D$, and 
we denote $S\cap p^{-1}(E)$ by $S$, where $p$ is the projection $N(F) \to F$. We identify $N(E)$ by $I \times I \times E$ for an interval $I$. 
Consider the singular set $\mathrm{Sing}(p_1(S))$ of the image of $S$ by the projection $p_1$ to $I \times E$. Perturbing $S$ if necessary, we assume that $\mathrm{Sing}(p_1(S))$ consists of double point curves, isolated triple points, and isolated branch points. Thus 
the image of $\mathrm{Sing}(p_1(S))$ by the projection to $E$ forms a finite graph $\Gamma$ on $E$ such that the degree of a vertex of $\Gamma$ is either $1$ or $6$, where we ignore the points in $\partial E$. 
An edge of $\Gamma$ presents 
to a double point curve, and a vertex of degree $1$ and degree $6$ present a branch point and a triple point, respectively. 

For such a graph $\Gamma$ obtained from a branched covering surface $S$ of degree 3, we assign labels and orientations to all edges of $\Gamma$ by the following method. We consider a path $\rho$ in $E$ such that $\rho \cap \Gamma$ is a point $x$ of an edge $\alpha$ of $\Gamma$. Then $S \cap p^{-1} (\rho)$ is a $3$-braid with one crossing in the cylinder $p^{-1}(\rho)$ such that $x$ corresponds to the crossing of the braid. Let $\sigma_{i}^{\epsilon}$ ($i \in \{1,2\}$, 
$\epsilon \in \{+1, -1\}$) be the presentation. We assign the edge $\alpha$ the label $i$, and the orientation such that 
the normal vector of $\rho$ is coherent (respectively, is not coherent) with the orientation of $\alpha$ if $\epsilon=+1$ (respectively, $-1$), where the normal vector of $\rho$ is the vector $\vec{n}$ such that for a tangent vector $\vec{v}(\rho)$ of $\rho$ at $x$, $(\vec{v}(\rho), \vec{n})$ is coherent with the orientation of the 2-disk $E$. The resulting $\Gamma$ is the chart of degree 3 presenting $S$. 
Conversely, when we have a chart on a surface diagram, we can construct the branched covering surface-knot with the chart presentation. See \cite{N4} for the structure of a branched covering surface-knot for neighborhoods of the singularities of the surface diagram or vertices of degree 2 in a chart. 
\\
  
Around a white vertex of a chart, there are six chart arcs. We call the arc which is the middle of the three adjacent arcs with the coherent orientation a {\it middle arc}, and we call an arc which is not a middle arc a {\it non-middle arc}. Around a white vertex, there are two middle arcs and four non-middle arcs. For edges/arcs around a white vertex, we call a pair of edges/arcs separated by two edges at each side {\it diagonal edges/arcs}. See Fig. \ref{fig4}.

\begin{figure}[ht] 
\includegraphics*[height=3cm]{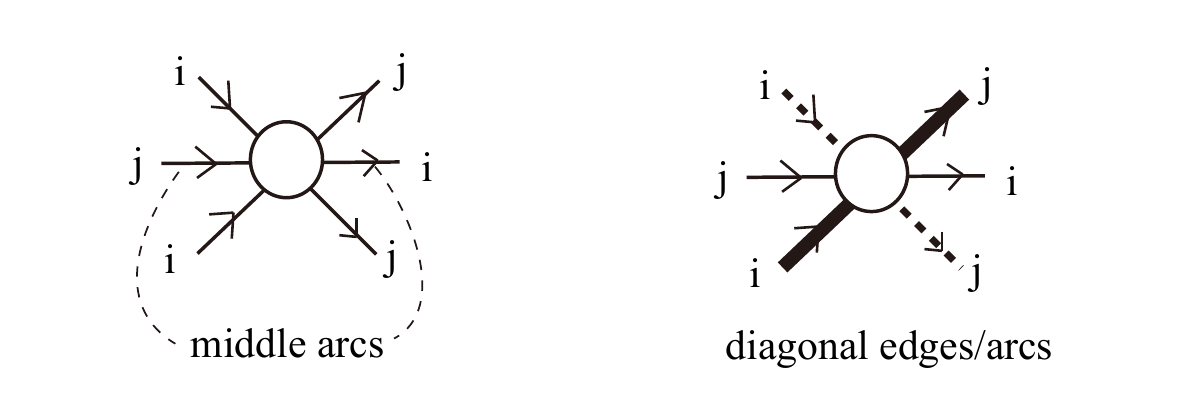}
\caption{Middle arcs and diagonal edges/arcs of a white vertex, where $\{i,j\}=\{1,2\}$.}
\label{fig4}
\end{figure}
 
\subsection{C-moves}
  
{\it C-moves} ({\it chart moves}) are local modifications of a chart, consisting of three types: CI-moves, CII-moves, and CIII-moves. CII-moves are modifications for charts of degree $n>3$, where we have vertices of degree 4 called crossings \cite{Kamada02}. We review CI-moves and CIII-moves for charts of degree 3. 
Let $\Gamma$ and $\Gamma^{\prime}$ be two charts of degree 3 on a surface diagram $D$. We say that $\Gamma$ and $\Gamma'$ are related by a {\it CI-move} or {\it CIII-move} if there exists a 2-disk $E$ in $D$ such that $E$ does not intersect with the singularities of $D$, and the loop $\partial E$ is in general position with respect to $\Gamma$ and $\Gamma^{\prime}$ and $\Gamma \cap (D-E)=\Gamma^{\prime} \cap (D-E)$, and the following conditions are satisfied.  
 
(CI) There are no black vertices in $\Gamma \cap E$ nor $\Gamma^{\prime} \cap E$. We use a CI-M1 move, a CI-M2 move and a CI-M3 move, as shown in Fig. \ref{fig5}. 
 
(CIII) $\Gamma \cap E$ and $\Gamma' \cap E$ are as in Fig. \ref{fig5}, where $\{i,j\}=\{1,2\}$, and the black vertex is connected to a non-middle arc of a white vertex. 
\\

\begin{figure}[ht]
\includegraphics*[width=13cm]{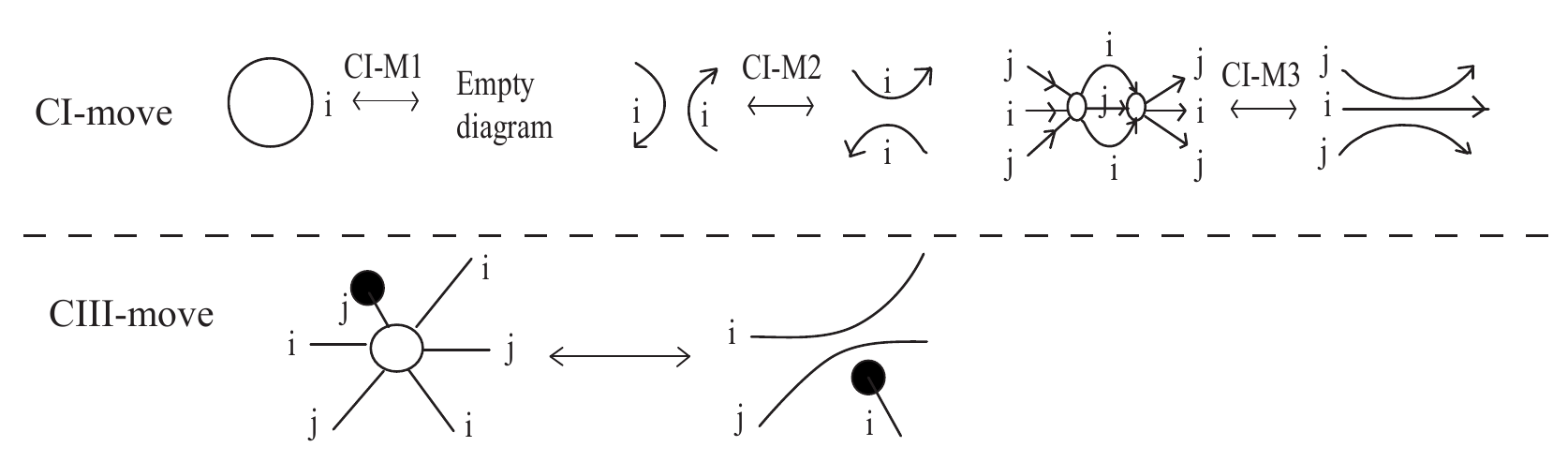}
\caption{C-moves, where $\{i,j\}=\{1,2\}$. For simplicity, we omit orientations of some of the edges.} 
\label{fig5}
\end{figure}

In this paper, we also use modifications of a CI-M3 move, called a {\it CI-M3' move} or a {\it CI-M3" move}, as shown in Fig. \ref{fig6}. We show in Fig. \ref{fig7} a proof of one of  CI-M3' moves as shown in the top left figure of Fig. \ref{fig6}. The other modifications in Fig. \ref{fig6} are shown similarly. 

\begin{figure}[ht]
\includegraphics*[width=13cm]{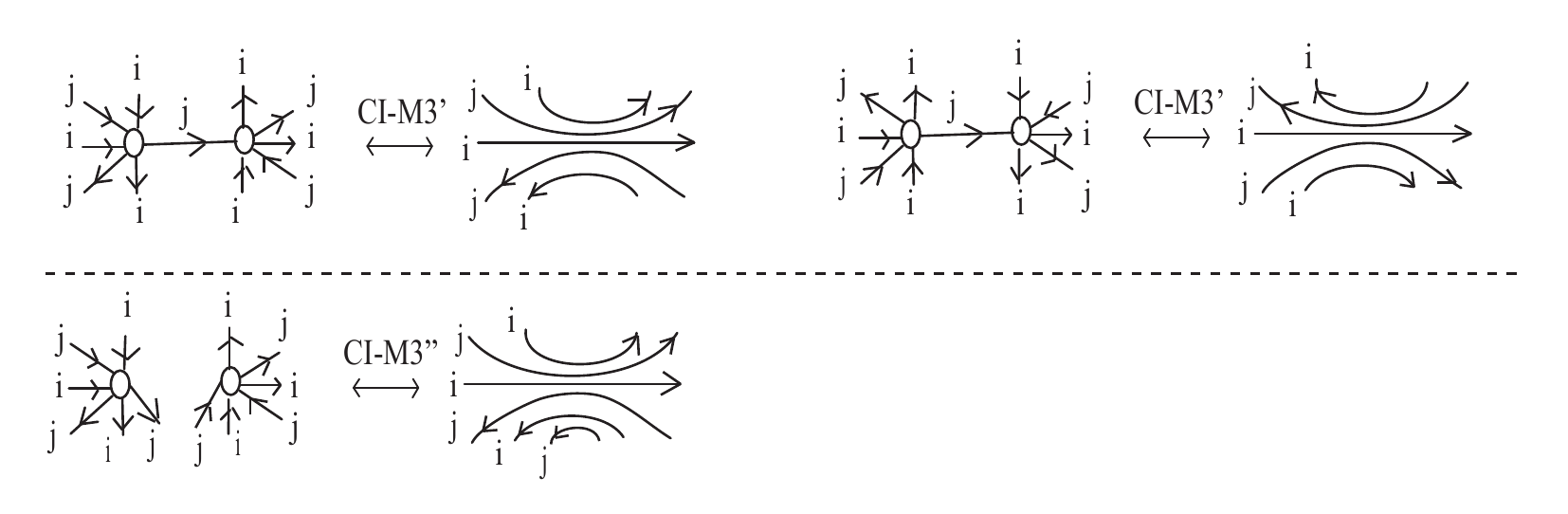}
\caption{CI-M3' moves and a CI-M3'' move, where $\{i,j\}=\{1,2\}$.}
\label{fig6}
\end{figure}

\begin{figure}[ht]
\includegraphics*[width=13cm]{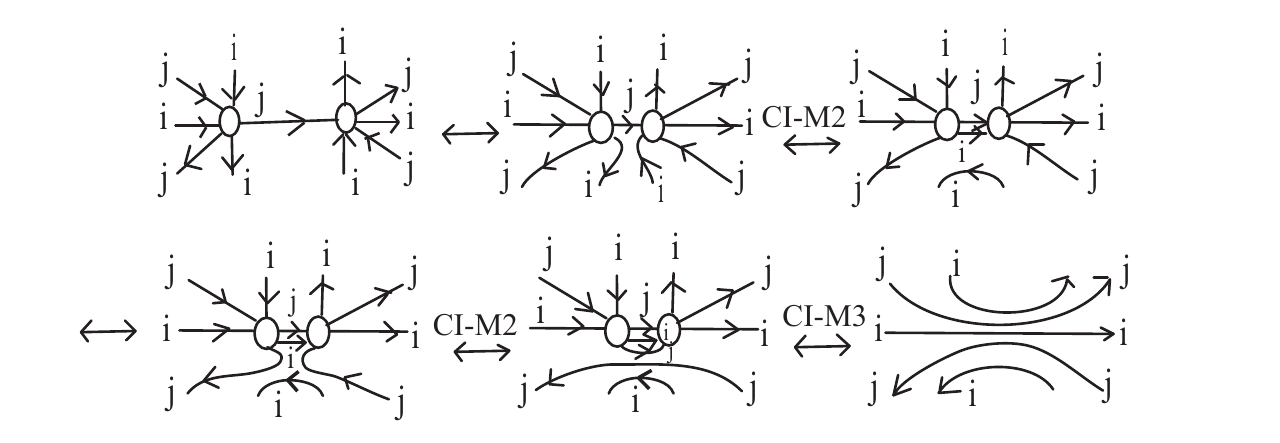}
\caption{Proof of a CI-M3' move, where $\{i,j\}=\{1,2\}$.} 
\label{fig7}
\end{figure}

\subsection{Roseman moves}\label{sec2-3}
 
{\it Roseman moves for surface diagrams with charts of degree 3} are defined by the original Roseman moves (see \cite{Roseman}) and moves for local surface diagrams with non-empty charts as in Fig. \ref{fig8} (see \cite{N4}), where we regard the diagrams for the original Roseman moves as equipped with empty charts. 
 
\begin{figure}[ht]
 \includegraphics*{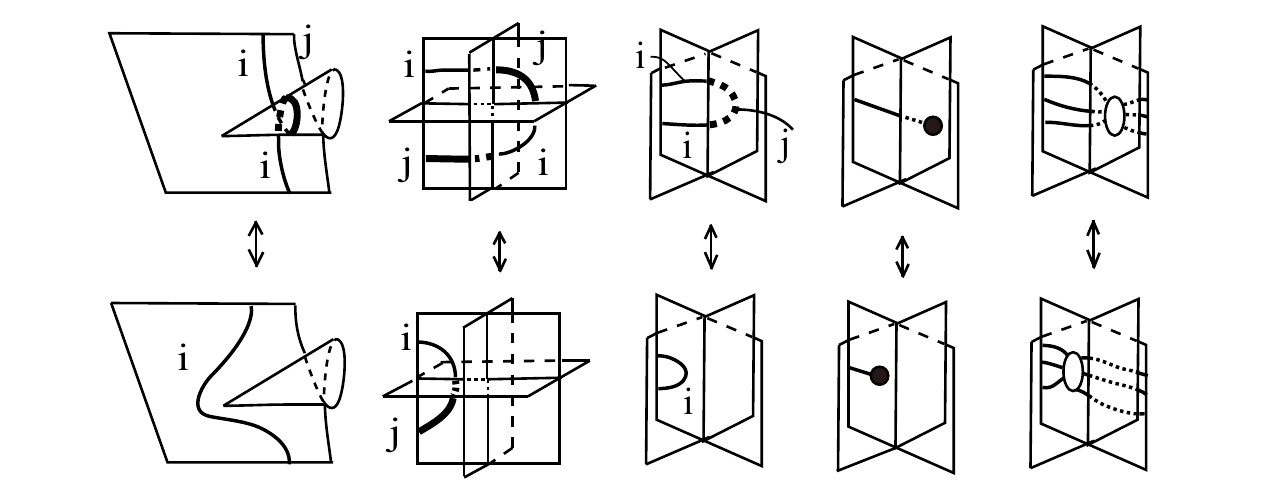}
\caption{Roseman moves for surface diagrams with charts of degree 3, where $\{i,j\} \in \{1,2\}$. For simplicity, we omit the over/under information of each sheet, and orientations and labels of chart edges. }
\label{fig8}
\end{figure}
 
For charts $\Gamma$ and $\Gamma'$ of degree 3 on a surface diagram of a surface-knot $F$, 
their presenting branched covering surface-knots are equivalent if the charts are related by a finite sequence of ambient isotopies of $\mathbb{R}^3$, C-moves and Roseman moves \cite{N4} (see also \cite{Kamada92, Kamada02, Roseman}).  

\begin{remark}\label{rem-2}
We remark that the deformations we use for the simplifying operation, such as sliding one end of a 1-handle, can be investigated by a similar method whether we have vertices of degree 2 or not. Hence, in the proofs of lemmas and theorems, we ignore the knottedness of $F$ and 1-handles. We assume that there are no singularities in surface diagrams of $F$ and 1-handles; in particular, we assume that 1-handles are trivial.  
But, one lemma, Lemma \ref{lem7}, requires the concerning 1-handle to be trivial. So we must be careful when we use Lemma \ref{lem7}; see Remark \ref{rem1}. 
\end{remark}

\section{Simplifying branched covering surface-knots}\label{sec3}
In this section, we show Theorem \ref{thm2}. We review lemmas used in \cite{N5}, and we explain the simplifying operation. Figures \ref{fig9} and \ref{fig10} are used in \cite{N5, N7, N6}. A figure similar to Fig. \ref{fig11} is in \cite{N5}. 

We consider branched covering surface-knots of degree 3. 
  
\begin{lemma}\label{lem1}
Let $\rho$ be a chart loop with the label $i$ $(i \in \{1,2\})$ in a branched covering surface-knot $(F, \Gamma)$. 
If there is a 1-handle $h(\sigma_i, e)$ near a neighborhood of an arc of $\rho$, then $(F, \Gamma)$ is equivalent to $(F, \Gamma\backslash \rho)$, where $\Gamma\backslash \rho$ denotes the chart obtained from $\Gamma$ by elimination of $\rho$. \end{lemma}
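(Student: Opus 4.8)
The plan is to carry out the degree-$3$ analogue of the loop-elimination already sketched for degree $2$ in the introduction (see Fig.~\ref{fig1}), working entirely inside a $2$-disk $E$ that contains the relevant arc of $\rho$ together with the part of the core loop of $h(\sigma_i, e)$ lying in $F$, and in which there are no other chart edges or vertices. By Remark~\ref{rem-2} I may ignore the knottedness of $F$ and of the $1$-handle and treat $h(\sigma_i,e)$ as trivial, so the whole argument reduces to a computation with a single label $i$ inside a disk, exactly as in the degree-$2$ case.

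First I would position the core loop of $h(\sigma_i,e)$ so that the arc of its chart loop lying in $E$ runs parallel to the chosen arc of $\rho$; both arcs carry the label $i$ by hypothesis. The only remaining datum to control is the orientation. If the two parallel arcs already carry opposite orientations, I proceed directly; if instead their orientations agree, I turn the $1$-handle around, which reverses the orientation that the core loop induces on its chart loop, exactly as in the passage from the first to the second row of Fig.~\ref{fig1}. After this normalization the two parallel arcs have the same label $i$ and opposite orientations.

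Next I would apply a CI-M2 move (see Fig.~\ref{fig5}) to these two parallel arcs, which recombines them so that the arc of $\rho$ is spliced into the chart loop running along the core loop of $h(\sigma_i,e)$. Sliding one end of the $1$-handle along what was $\rho$ then drags $\rho$ off $E$ and cancels it, leaving $h(\sigma_i,e)$ with its own chart loop intact but with $\rho$ removed; this is precisely $(F,\Gamma\backslash\rho)$. Since each step used is a C-move or an isotopy performed in a region free of other chart features, the resulting branched covering surface-knot is equivalent to the original one.

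The step I expect to require the most care is the orientation bookkeeping: one must verify that turning the $1$-handle around really does produce the opposite orientation needed to license the CI-M2 move, and that after the splice the handle-slide removes $\rho$ without creating any new black or white vertices. Because $E$ contains no other part of $\Gamma$ and the label is fixed throughout, these checks are local and reduce to the same picture used in degree $2$; no genuinely new phenomenon arises from passing to degree $3$ for a single-label loop.
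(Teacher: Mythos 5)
Your proposal is correct and follows essentially the same route as the paper's own proof: a CI-M2 move between the arc of $\rho$ and the chart loop of $h(\sigma_i,e)$ (after turning the $1$-handle around if the orientations do not match, as in the second row of Fig.~\ref{fig1}), followed by sliding an end of the $1$-handle to absorb and cancel $\rho$. The orientation normalization you spell out is exactly the point the paper delegates to Fig.~\ref{fig1}, so no further comment is needed.
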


\begin{proof}
Applying a CI-M2 move between arcs of $h=h(\sigma_i,e)$ and sliding an end of $h$ as in Fig. \ref{fig1}, we eliminate $\rho$, and the other parts of $\Gamma$ remains unchanged. Thus we have the required result. 
\end{proof}

\begin{proposition}\label{prop1}
Let $(F, \Gamma)$ be a branched covering surface-knot of degree 3 such that $\Gamma$ has no white vertices. By an addition of $h(\sigma_1, e)+h(\sigma_2,e)$, $(F, \Gamma)$ deforms to a simplified form, hence $u(F, \Gamma) \leq 2$. 
\end{proposition}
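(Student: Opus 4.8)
The plan is to strip $\Gamma$ down to its free edges using one 1-handle per label. Since $\Gamma$ has no white vertices, after absorbing the degree-2 vertices every edge of $\Gamma$ has its endpoints among the black vertices only, so each edge is either a free edge joining two black vertices or a closed chart loop carrying no vertex. Let $\Gamma_0$ denote the union of the free edges; then it suffices to remove every chart loop, leaving $\Gamma_0$ together with the two added 1-handles.

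First I would attach $h(\sigma_1,e)$ and $h(\sigma_2,e)$ to a 2-disk $E$ disjoint from $\Gamma$, and then treat the chart loops one at a time according to their labels. For a chart loop $\rho$ of label $i\in\{1,2\}$, I slide an end of $h(\sigma_i,e)$ along $F$ until the loop carried by its core loop runs parallel to an arc of $\rho$; this realizes the hypothesis of Lemma \ref{lem1} (a 1-handle $h(\sigma_i,e)$ sitting in a neighborhood of an arc of $\rho$), so Lemma \ref{lem1} removes $\rho$ and leaves the rest of $\Gamma$ unchanged. Crucially, the 1-handle itself is not consumed: only its chart loop is used, through a CI-M2 move and an end slide as in the first row of Fig. \ref{fig1}, so the same $h(\sigma_i,e)$ can be reused for every remaining loop of label $i$. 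Iterating over all loops of each label removes them all.

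Two points require care, and the second of them is the genuine technical step. When the orientation of $\rho$ disagrees with the orientation of the chart loop on the core loop of $h(\sigma_i,e)$, the CI-M2 move does not apply directly; I resolve this exactly as in the second row of Fig. \ref{fig1}, by turning the 1-handle around so that the loop along its core loop acquires the opposite orientation before eliminating $\rho$. Since the 1-handle can thus be matched to either orientation, one handle per label indeed handles all loops of that label. The remaining point is the sliding of the 1-handle end, which may drag it across edges of $\Gamma$: in the absence of white vertices these passages meet only arcs and black vertices and are realized by the local modifications (Roseman moves and CI-moves), so $h(\sigma_i,e)$ can always be brought adjacent to any prescribed loop. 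This sliding is the main obstacle, and it is mild here precisely because no white vertices are present. Once every chart loop is eliminated, $\Gamma$ has been deformed to $\Gamma_0$ while the two 1-handles persist, so $(F,\Gamma)$ has been deformed to $(F,\Gamma_0)+h(\sigma_1,e)+h(\sigma_2,e)$, which is a simplified form; hence $u(F,\Gamma)\le 2$.
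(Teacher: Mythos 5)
Your argument is correct and is essentially the paper's proof: with no white vertices, $\Gamma$ is a union of free edges and chart loops, and one added handle per label eliminates every loop via Lemma \ref{lem1}, with the orientation mismatch handled by turning the handle around as in Fig.~\ref{fig1}. The one place you diverge is in how the handle reaches each loop: sliding a lone $h(\sigma_i,e)$ end across chart edges of the other label is not an obviously available move (the paper only moves the \emph{pair} $h(\sigma_1,a)+h(\sigma_2,b)$ through arbitrary edges, in Lemma \ref{lem5-2}), whereas the paper sidesteps this by eliminating loops in order of adjacency, starting from the loop whose neighborhood contains the handles, so each elimination opens access to the next loop and no crossing is ever required.
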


\begin{proof}
Since $\Gamma$ has no white vertices, $\Gamma$ consists of several free edges and chart loops. By an addition of $h(\sigma_1, e)+h(\sigma_2, e)$, applying Lemma \ref{lem1} to each chart loop from the loop whose neighborhood has the 1-handles, we eliminate all chart loops. Thus we have a simplified form, and $u(F, \Gamma) \leq 2$.  
\end{proof}

\begin{lemma}\label{lem2}
If there is a 1-handle $h=h(\sigma_i, e)$ near a neighborhood of a non-middle arc with the label $i$ of a white vertex $w$, then, by equivalent deformation, $h$ collects $w$ as illustrated in Fig. \ref{fig9}.
\end{lemma}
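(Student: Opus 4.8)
The plan is to imitate the elimination procedure of Fig.~\ref{fig1} and the proof of Lemma~\ref{lem1}, but with the target arc being a non-middle arc of the white vertex $w$ rather than an arc of a free-standing chart loop. First I would position the chart loop of $h=h(\sigma_i,e)$ so that one of its arcs runs parallel to the non-middle arc of label $i$ at $w$; since the loop and the target arc carry the same label $i$, this is possible. As in the second row of Fig.~\ref{fig1}, if the orientations of the two parallel arcs do not match the pattern required for a CI-M2 move, I would first turn the 1-handle around, so that the loop arc acquires the opposite orientation and the two parallel arcs then have opposite orientations, after which a CI-M2 move applies.

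Next, I would apply a CI-M2 move between the loop arc of $h$ and the non-middle arc, producing two parallel arcs that join the handle to the chart in a neighborhood of $w$, and then slide an end of $h$ toward $w$ along the non-middle arc. The essential step is to push that end across the white vertex. This is exactly the situation controlled by the CI-M3$'$ and CI-M3$''$ moves of Fig.~\ref{fig6}, together with the basic CI-M3 move of Fig.~\ref{fig5}: because the arc being slid and the loop both have label $i$ and the relevant edge is a non-middle arc of $w$, one of these moves carries the end of $h$ past $w$, so that $w$ is absorbed into the region governed by $h$. Combining these elementary modifications yields precisely the configuration of Fig.~\ref{fig9}, in which $h$ has collected $w$.

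The main obstacle is the orientation and label bookkeeping around the degree-$6$ vertex. There are four non-middle arcs (two of each label) and two middle arcs, each carrying a prescribed orientation dictated by Fig.~\ref{fig2}, and I must check that for the specific non-middle arc of label $i$ under consideration the CI-M2 step really produces parallel arcs of opposite orientation (falling back on turning the 1-handle if it does not), and that the CI-M3-type move applicable to that arc's orientation is the one reproducing Fig.~\ref{fig9} rather than some other local picture. I also need to verify the hypotheses of the C-moves, namely that the supporting 2-disk contains no black vertices (the (CI) condition) and that the two middle arcs of $w$, which are not involved in the move, remain unchanged throughout. Once these case distinctions are settled, and by the symmetry of the white-vertex picture they reduce to essentially one computation, the whole deformation is a finite sequence of CI-moves followed by an end-slide, hence an equivalence, which completes the proof.
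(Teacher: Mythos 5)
Your opening steps (positioning the loop of $h(\sigma_i,e)$ parallel to the non-middle arc, turning the handle around to fix the orientation as in the second row of Fig.~\ref{fig1}, applying a CI-M2 move, and sliding an end of $h$ toward $w$) agree with the paper. But the crucial step is wrong: you attribute the absorption of $w$ to CI-M3, CI-M3$'$ and CI-M3$''$ moves, and these are not the mechanism here. In this paper the CI-M3-type moves create or annihilate \emph{pairs} of white vertices in a lens configuration (see Fig.~\ref{fig6} and their use in Fig.~\ref{fig12} to cancel $(w_1,w_2)$); they do not carry a handle end past a single white vertex, and applying one would change the number of white vertices rather than relocate $w$. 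The actual proof uses no CI-M3-type move at all: after the first CI-M2 move one simply moves $w$ onto $h$ by isotopy and then applies CI-M2 moves twice more, first between the two arcs with label $j$ and then between the two arcs with label $i$, as in Fig.~\ref{fig10}.

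There is a second gap of substance. ``Collecting $w$'' does not just mean getting the handle end past $w$; the conclusion of the lemma is the precise configuration in the rightmost picture of Fig.~\ref{fig9}: the two diagonal arcs of $w$ run along the core of $h$, and the remaining four arcs pair up into two edges each of whose \emph{both} endpoints lie on $w$. This normal form is exactly what Lemma~\ref{lem3} and the proof of Theorem~\ref{thm2} rely on (it is why the two extra CI-M2 moves, one for each label, are needed), and your proposal never produces or verifies it --- ``absorbed into the region governed by $h$'' leaves the local picture around $w$ undetermined. To repair the argument, replace the CI-M3 step by: move $w$ onto $h$, then perform the two CI-M2 moves of Fig.~\ref{fig10}, and check that the six arcs around $w$ land in the stated diagonal/doubled-edge pattern.
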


\begin{figure}[ht]
\centering
\includegraphics*[width=13cm]{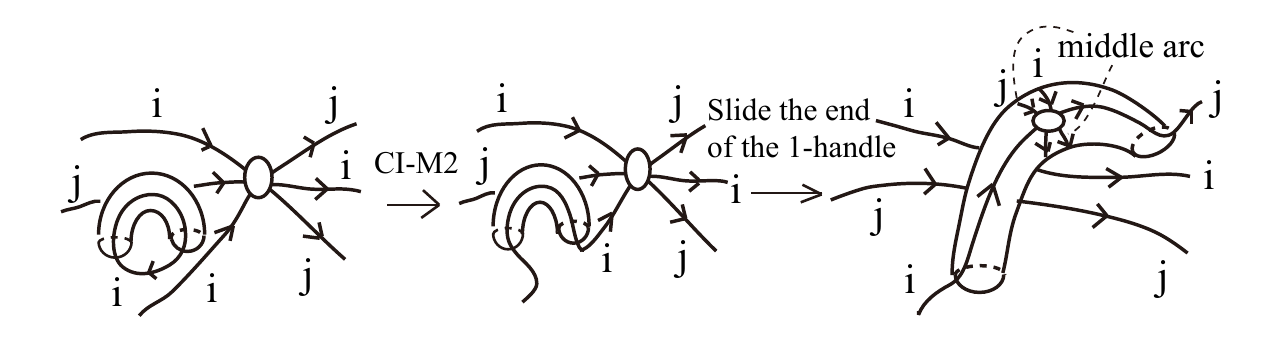}
\caption{Collecting a white vertex on a 1-handle, where $\{i,j\}=\{1,2\}$. See Fig. \ref{fig10} for deformations when we slide the end of the 1-handle. The orientations of the edges are an example.}
\label{fig9}
\end{figure}

\begin{proof}
Apply a CI-M2 move between the arc of $h=h(\sigma_i,e)$ and the non-middle arc of $w$. Move $w$ on $h$, and apply CI-M2 moves twice, first between arcs with the label $j$ and secondly between arcs with the label $i$, where $\{i,j\}=\{1,2\}$; see Fig. \ref{fig10}. Then $w$ is on $h$ such that there are two diagonal arcs along the core, and the other four arcs become two edges whose both endpoints are connected with $w$, as in the rightmost figure of Fig. \ref{fig9}. 
\end{proof}
\begin{figure}[ht]
\centering
\includegraphics*[width=13cm]{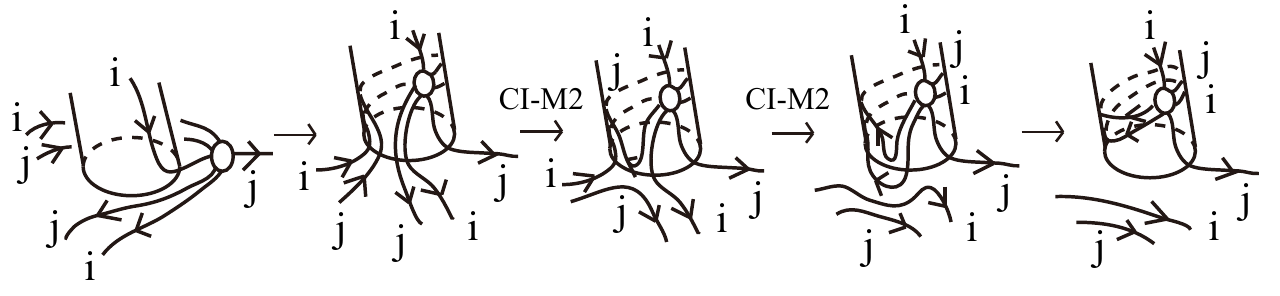}
\caption{Sliding the end of a 1-handle, where $\{i,j\}=\{1,2\}$. }
\label{fig10}
\end{figure}

\begin{lemma}\label{lem3}
Let $h$ be a 1-handle on which there is a white vertex $w$ such that two edges are at both endpoints connected with $w$, as in the rightmost figure of Fig. \ref{fig9}. Let $i,j$ $(\{i,j\}=\{1,2\})$ be the labels of edges. 
Let $\alpha$ be one of the edges whose both endpoints are connected with $w$, and let $k$ $(k=i,j)$ be the label of $\alpha$. 
If we add a 1-handle $h'=h(\sigma_k, e)$ in a neighborhood of $\alpha$, then, after collecting $w$ from $h$ to $h'$, the orientations of the two edges connected at both endpoints with $w$ are reversed from when they were on $h$, where we identify the oriented diagonal edges along the core of $h$ with those of $h'$. 
\end{lemma}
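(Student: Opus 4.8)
The plan is to obtain the whole statement from a single application of Lemma~\ref{lem2} and then to reduce the claimed reversal to a comparison of two local pictures, each of which is rigidly determined by the white-vertex relation.

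First I would verify that Lemma~\ref{lem2} applies. In the configuration of the rightmost figure of Fig.~\ref{fig9}, the two arcs of $w$ running along the core of $h$ are exactly the two middle arcs, so the remaining four arcs---which assemble into the two loop-edges with labels $i$ and $j$---are all non-middle. In particular the edge $\alpha$, having both of its endpoints at $w$ and label $k$, meets $w$ along non-middle arcs with label $k$. Hence $h'=h(\sigma_k,e)$, placed in a neighborhood of $\alpha$, sits near a non-middle arc of $w$ with label $k$, and Lemma~\ref{lem2} collects $w$ onto $h'$: the two middle arcs of $w$ again run along the core, now of $h'$, while the other four arcs reorganize into two edges with both endpoints at $w$. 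These along-core arcs are literally the same two arcs of $w$ in both pictures; they are the \emph{diagonal edges along the core} under which we identify the configuration on $h$ with the one on $h'$.

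Next I would exploit the rigidity of the white vertex. Once the labels and orientations of the two core arcs are fixed, the relation $\sigma_i\sigma_j\sigma_i=\sigma_j\sigma_i\sigma_j$ determines the labels and orientations of all six arcs around $w$, and hence the orientations of both loop-edges, in each of the two pictures. Thus the lemma becomes a comparison of two completely determined local pictures sharing the same oriented core arcs, and the only thing left to decide is the sense in which the four non-middle arcs are reassembled into loop-edges. I would settle this by following the explicit CI-M2 moves in the proof of Lemma~\ref{lem2} (those drawn in Fig.~\ref{fig10}) and tracking the four non-middle arcs. The geometric point is that $\alpha$ lies on the side of $w$ opposite to the direction in which $w$ originally sat on $h$, so that pulling $w$ onto $h'$ through $\alpha$ realizes $w$ on $h'$ as, up to isotopy, the reflection of the original picture across the core axis. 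The two core arcs lie on this axis and keep their orientations---which is exactly what makes the identification orientation-preserving---while the four non-middle arcs are reflected across it; since a reflection reverses the cyclic sense around $w$, each loop-edge is carried to the opposite side of the core and is traversed in the opposite direction. Identifying the oriented core arcs of $h$ with those of $h'$ then yields the asserted reversal of the two loop-edges.

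The main obstacle will be making the \emph{reflection across the core axis} description precise and checking it arc-by-arc through the three CI-M2 moves, while confirming that the two core arcs do keep their orientations (so that the identification respects orientation) and that only the loop-edges flip. This is a finite, figure-driven check of labels and arrow directions, but it must be carried out carefully: the conclusion is a statement purely about relative orientations, so a single mistracked arrow would invert it.
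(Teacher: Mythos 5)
There is a genuine gap, and it starts with your very first claim about the local picture. You assert that in the rightmost figure of Fig.~\ref{fig9} the two arcs of $w$ running along the core of $h$ are the two middle arcs, so that all four arcs forming the loop-edges are non-middle. This is impossible: for a white vertex the three inward arcs carry labels $i,j,i$ and the three outward arcs carry $j,i,j$, so if the two middle arcs (the middle inward arc, label $j$, and the middle outward arc, label $i$) were removed to lie along the core, the remaining four arcs would be two inward arcs of label $i$ and two outward arcs of label $j$ --- and these cannot be paired into two single-labeled edges each with one initial and one terminal point at $w$. The diagonal pair along the core is necessarily one of the two \emph{non-middle} diagonal pairs, and consequently each loop-edge $\alpha$ meets $w$ in exactly one middle arc and one non-middle arc. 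This is precisely the observation on which the paper's proof turns: since only one of the two arcs of $\alpha$ at $w$ is non-middle, the CI-M2 move and the slide of the end of $h'$ in Lemma~\ref{lem2} are forced to take place along that unique arc, which is what makes the resulting configuration on $h'$ (and hence the claimed orientation reversal) well defined. Your premise instead suggests two symmetric ways to collect $w$, so the rigidity argument you build on it does not get off the ground.

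A second problem is that the decisive step --- the ``reflection across the core axis'' description --- is asserted rather than derived, and you acknowledge yourself that the arc-by-arc check through the CI-M2 moves of Fig.~\ref{fig10} remains to be done; but that check \emph{is} the proof, since the conclusion concerns the winding direction of the loop-edges around the 1-handle relative to the identified oriented diagonal arcs, and cannot be read off from the combinatorics at $w$ alone. Note also that the along-core arcs of $h'$ are not ``literally the same two arcs'' as those of $h$: sliding along the unique non-middle arc of $\alpha$ puts the \emph{other} non-middle diagonal pair along the core of $h'$, and the identification in the statement is between these two distinct (but identically labeled and oriented) pairs. The paper's own proof consists of the uniqueness observation above, an explicit verification in Fig.~\ref{fig11} for $k=i$, and the remark that $k=j$ yields the same form; to repair your argument you would need to correct the middle/non-middle identification and then actually carry out the tracking of the four loop arcs through the moves of Fig.~\ref{fig10}.
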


\begin{proof}
Note that for the two arcs of $\alpha$ connected with $w$, there is only one arc which is a non-middle arc, along which we slide an end of $h'$ and collect $w$. See Fig. \ref{fig11} for the case $k=i$. For the other case $k=j$, the form of $h'$ after collecting $w$ is the same with the case $k=i$, and we have the required result. 
\end{proof}

\begin{figure}[ht]
\centering
\includegraphics*[width=11cm]{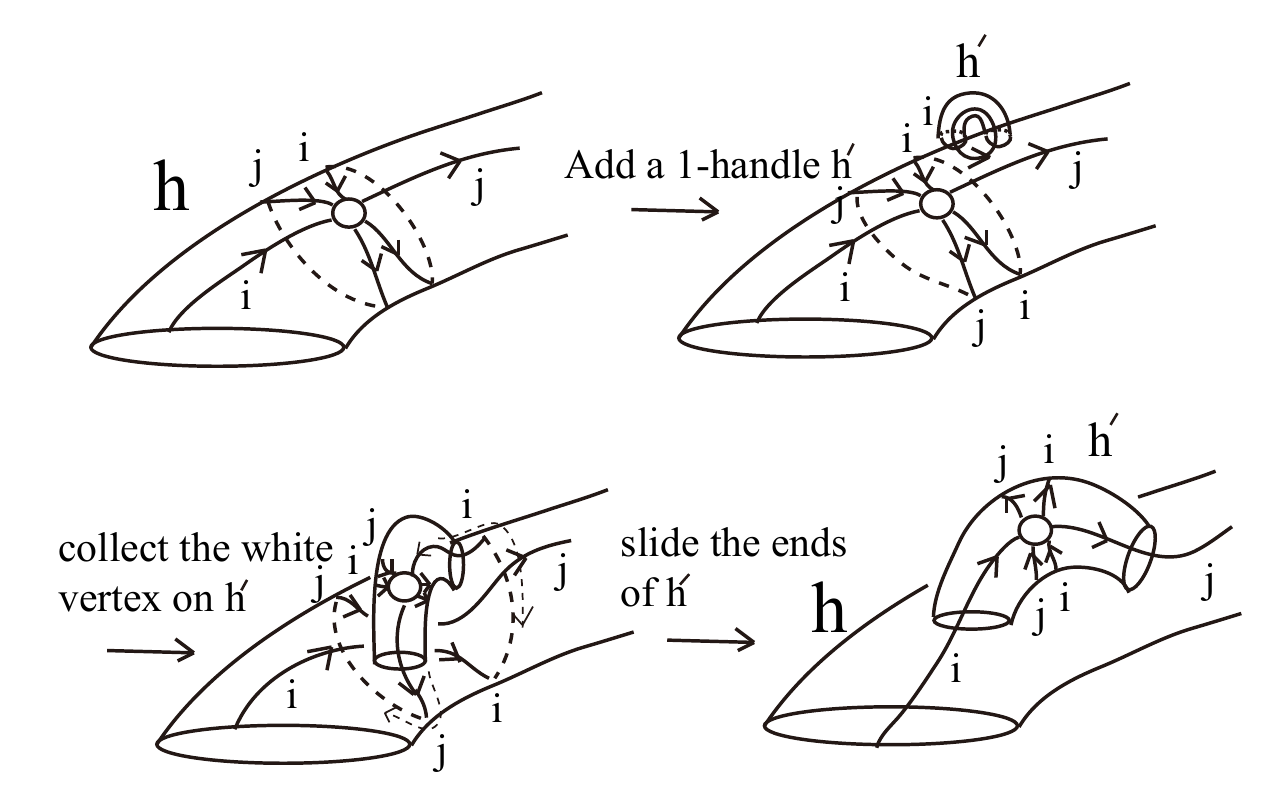}
\caption{Reversal of orientations of the edges connected at both endpoints with a white vertex on a 1-handle, where $\{i,j\}=\{1,2\}$. The orientations of the edges are an example.}
\label{fig11}
\end{figure}

\begin{proof}[Proof of Theorem \ref{thm2}]
If $\Gamma$ has no white vertices, then the result follows from Proposition \ref{prop1}. 
We assume that $\Gamma$ has a positive number of white vertices. 
We consider the case when $\Gamma$ has no black vertices. 
We add a 1-handle $h$ in a neighborhood of a non-middle arc of a white vertex $w_1$. Apply a CI-M2 move, slide an end of $h$, and by moves illustrated in Figs. \ref{fig9} and \ref{fig10}, collect $w_1$ on $h$. Then we slide the end of $h$. If $h$ comes to a non-middle arc of another white vertex $w_2$, then we collect $w_2$ on $h$. If $h$ comes to a middle arc of another white vertex $w_2$, then we add another 1-handle $h'$ near a non-middle arc of $w_2$, apply a CI-M2 move, slide an end of $h'$ and collect $w_2$ on $h'$, and let the end of $h$ pass under $h'$. Repeat this process, until the end of $h$ comes back to a neighborhood of the other end of $h$. The resulting $h$ has several white vertices. Repeat this process, until we have all white vertices on 1-handles. 

Take a 1-handle $h$ on which there are several white vertices, and 
consider edges connected to one white vertex at both endpoints.  If the orientations of these edges for one white vertex $w_1$ are opposite to those for another adjacent white vertex $w_2$, then, as shown in Fig. \ref{fig12}, by a CI-M3' move and two CI-M1 moves, we eliminate the pair of vertices $(w_1, w_2)$. If the orientations of the edges of $w_1$ and $w_2$ are coherent, then, as shown in Fig. \ref{fig11}, add a 1-handle $h'$ in a neighborhood of $w_1$, apply a CI-M2 move, slide the ends of $h'$ and collect $w_1$ and then $w_2$ and the other vertices on $h'$. By Lemma \ref{lem3}, on $h'$, the orientations of the edges whose endpoints are connected with $w_1$ are reversed. By the same process with the former case, by modifications as shown in Fig. \ref{fig12}, we eliminate $(w_1, w_2)$. The resulting $h'$ is a 1-handle obtained from $h$ by eliminating $(w_1, w_2)$. Repeat this process, until we eliminate all white vertices. The resulting chart consists of several chart loops and 1-handles with chart loops. Adding $h(\sigma_i, e)$ ($i \in \{1,2\}$) if necessary and applying Lemma \ref{lem1}, we eliminate chart loops except those consisting $h(\sigma_i, e)$.  Thus we have a simplified form.

We consider the case when $\Gamma$ has a positive number of black vertices. By the same process as in the case when $\Gamma$ has no black vertices, we collect white vertices on 1-handles. The middle arcs of a white vertex $w$ on a 1-handle $h$ are connected at both endpoints with $w$; see the rightmost figure of Fig. \ref{fig9}. Hence, if a black vertex is connected with a chart edge on $h$, then, the edge is always a non-middle edge of a white vertex on $h$. Thus, applying CIII-moves if necessary, we assume that there are no black vertices on $h$, and the ends of $h$ are in the neighborhood of each other when we have collected white vertices. Thus, by an addition of 1-handles with chart loops, we eliminate all white vertices and the resulting chart consists of several free edges, chart loops and 1-handles with chart loops. Then, adding $h(\sigma_i, e)$ ($i \in \{1,2\}$) if necessary and applying Lemma \ref{lem1}, we eliminate unnecessary chart loops and we have a simplified form.
\end{proof}

\begin{figure}[ht] 
\includegraphics*[height=6cm]{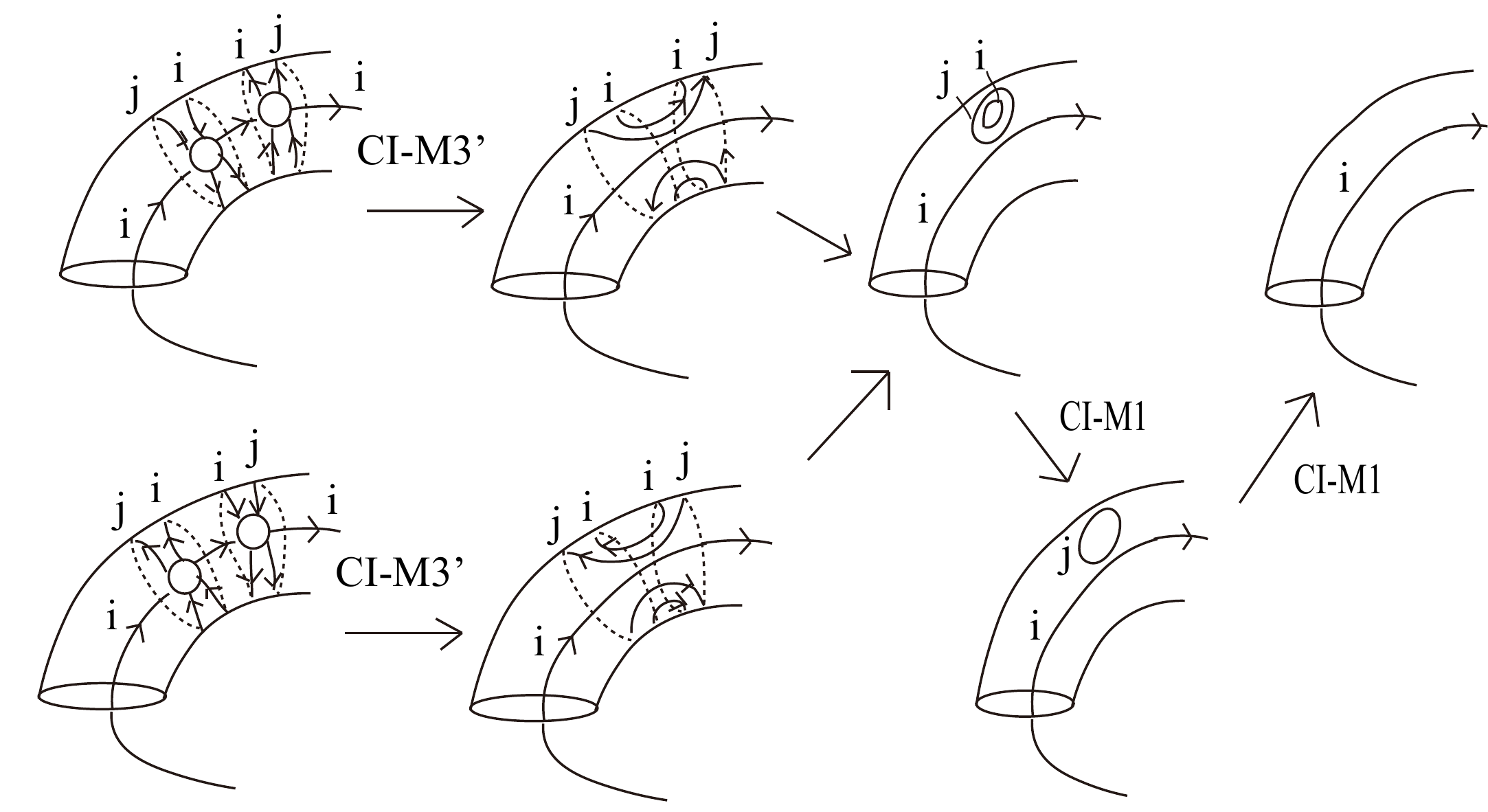}
\caption{Eliminating the pair of white vertices $(w_1, w_2)$, where $\{i,j\}=\{1,2\}$.}
\label{fig12}
\end{figure}

\section{Proofs of Theorems \ref{thm3} and \ref{thm4}}\label{sec4}

\subsection{Key Lemma}\label{sec4-1}
\begin{lemma}\label{lem4}
Let $(F, \Gamma)$ be a branched covering surface-knot of degree $3$. 

$(1)$ If $\Gamma$ is has no black vertices, then, for any white vertex $w$ in $\Gamma$, there is a closed path $\rho \subset F$ with the base point near $w$ such that several white vertices including $w$ can be collected on a 1-handle $h=h(\sigma_i,e)$ for some $i \in \{1,2\}$ by applying a CI-M2 move and sliding one end of $h$ along $\rho$. 

$(2)$ If $\Gamma$ has a black vertex, then $\Gamma$ is equivalent to a chart such that each black vertex is at an endpoint of a free edge. 
\end{lemma}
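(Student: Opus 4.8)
The plan is to prove the two parts separately, using Lemma \ref{lem2} as the engine for part $(1)$ and CIII-moves as the engine for part $(2)$.

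For part $(1)$, I would first record that, since $\Gamma$ has no black vertices, every edge (after absorbing the degree-$2$ vertices) is either a chart loop or an edge joining white vertices. Around the given white vertex $w$ there are four non-middle arcs; I would fix any one of them and let $i\in\{1,2\}$ be its label. Placing a $1$-handle $h=h(\sigma_i,e)$ in a neighborhood of this arc, applying a CI-M2 move, and invoking Lemma \ref{lem2}, I collect $w$ onto $h$, so that $w$ sits on $h$ with two diagonal arcs running along the core and the remaining four arcs forming two edges based at $w$ (the rightmost configuration of Fig. \ref{fig9}). I would then take $\rho$ to be the trajectory of the free end of $h$ as it is slid along $\Gamma$ following the edge that carries a core arc: each time the end reaches a further white vertex along a non-middle arc of the matching label and orientation, I again apply Lemma \ref{lem2} to collect it and continue, and I route the end back through a chart-free region so that $\rho$ closes up into a loop based near $w$. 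This is exactly the collecting sub-procedure already carried out in the proof of Theorem \ref{thm2}, now asserted to start from an arbitrary $w$, and the collected set is the desired ``several white vertices including $w$.''

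For part $(2)$, I would push black vertices off the white part of $\Gamma$. Take a black vertex $b$ with its unique incident edge $\alpha$, of some label $i$. If the other endpoint of $\alpha$ is black, then $\alpha$ is already a free edge. Otherwise $\alpha$ runs into a white vertex $w$; when $b$ lies on a non-middle arc of $w$ a CIII-move applies and slides $b$ through $w$ onto the diagonal non-middle arc, while if $b$ lies on a middle arc I would first use a CI-M3' or CI-M3'' move of Fig. \ref{fig6} to reposition $\alpha$ onto a non-middle arc and then apply CIII. Iterating, $b$ hops over successive white vertices and traces a path through $\Gamma$. I would continue until either $b$ meets another black vertex, so that the intervening edge becomes a free edge, or the traced path closes into a cycle of white vertices, in which case a CI-M3' reduction together with CI-M1 moves (as in the proof of Theorem \ref{thm2}) cancels a pair of white vertices and shortens the configuration. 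Doing this for every black vertex yields a chart in which each black vertex is an endpoint of a free edge.

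The main obstacle in part $(1)$ is the local bookkeeping at white vertices: one must verify that after each application of Lemma \ref{lem2} the core arc emerging from the handle meets the next white vertex along a \emph{non-middle} arc of the correct label and orientation, so that Lemma \ref{lem2} can be reapplied and the trajectory can be closed. This rests on the combinatorics of the hexagonal white vertex (the alternating labels around it, the two middle arcs forming a diagonal pair, and the non-middle arcs occurring in diagonal pairs) together with the orientation control supplied by Lemma \ref{lem3}. In part $(2)$ the main obstacle is termination, namely ruling out a black vertex that hops indefinitely around a cycle of white vertices; I expect to resolve this by exhibiting a monovariant, such as the number of white vertices lying on the hopping path or the total complexity of $\Gamma$, that strictly decreases under the combined CIII and CI-M3' reductions, so that the process must halt with $b$ at the endpoint of a free edge.
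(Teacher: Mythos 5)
Your overall strategy (collect white vertices onto a single 1-handle by repeated use of Lemma \ref{lem2} for part $(1)$; push black vertices through white vertices by CIII-moves for part $(2)$) matches the paper's in outline, but in both parts you have deferred exactly the step that carries the content of the lemma, and your proposed fixes for that step do not work.

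In part $(1)$, the edge $\alpha_2$ emerging from the sliding end of $h$ need not meet the next white vertex $w_2$ along a non-middle arc: it can perfectly well be a \emph{middle} arc of $w_2$, in which case Lemma \ref{lem2} does not apply. You name this as ``the main obstacle'' but then only gesture at ``the combinatorics of the hexagonal white vertex''; those combinatorics do not rule the middle-arc case out. (In the proof of Theorem \ref{thm2} this case is escaped by adding a \emph{second} 1-handle, which is precisely what Lemma \ref{lem4}$(1)$ must avoid.) The paper's resolution is different: when $\alpha_2$ is a middle arc of $w_2$, the end of $h$ is re-attached, via the deformation of Fig.~\ref{fig26}, to another edge of the same label reachable by a path disjoint from $\Gamma$; and if every such reachable edge again terminates in a white vertex of the wrong type, Lemma \ref{lem5-3} produces a sequence of mutually distinct edges $\alpha_2,\beta_2,\alpha_3,\beta_3,\dots$ forming a connected path which, by an orientation count, must eventually return to the edge $\alpha_1$ at the fixed end of $h$; a CI-M2 move between $\alpha_2$ and $\alpha_n=\alpha_1$ then closes the loop $\rho$. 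Your alternative --- ``route the end back through a chart-free region'' --- assumes the existence of such a region and of a closing-up, which is exactly what has to be proved.

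In part $(2)$ the same middle-arc case is the sticking point, and your remedy (a CI-M3$'$ or CI-M3$''$ move to ``reposition $\alpha$ onto a non-middle arc'') is not available: those moves are local modifications involving a specific pattern of \emph{two} white vertices (Fig.~\ref{fig6}) and do not convert a middle-arc attachment at a single white vertex into a non-middle one. The paper again invokes Lemma \ref{lem5-3}: since $\alpha_2$ starts at a black vertex, the sequence it generates cannot close into a cycle, so it must terminate at an edge ending at a white vertex of the opposite type or at a black vertex; a CI-M2 move reconnects $\alpha_2$ there, after which a CIII-move or a CI-M3-type cancellation applies. Your worry about termination, by contrast, is easily dispatched without inventing a monovariant: each CIII-move eliminates a white vertex, so the number of white vertices strictly decreases. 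In short, the missing ingredient in both parts is the path-tracing argument of Lemma \ref{lem5-3}; without it neither the closure of $\rho$ in $(1)$ nor the escape from the middle-arc case in $(2)$ is established.
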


We prove Lemma \ref{lem4} in Section \ref{subsec5-2}.

\subsection{Notations and Lemmas}\label{sec4-2}

We determine the {\it cocore} of a 1-handle $h=B^2 \times I$ by the oriented closed path $\partial B^2 \times \{0\} \subset h$, with the orientation of $\partial B^2$. 
Further, we determine the base point of the core loop and the cocore of $h$ by their intersection point. We call the ends $B^2 \times \{0\}$ and $B^2 \times \{1\}$ the {\it initial end} and the {\it terminal end}, respectively. We call the arc used to form the core loop of a 1-handle from the core the {\it base arc}. See Fig. \ref{fig13}. 

Let $h$ be a 1-handle whose ends are on a 2-disk $E$. 
For braids $a$ and $b$ which commute, we denote by $h(a,b)$ a 1-handle whose cocore with the given orientation and core loop with the reversed orientation present the braids $a$ and $b$, respectively. We draw such a chart on $h+E$, and this notation $h(a,b)$ is unique \cite{N}. We call $h(a,b)$ a 1-handle with a chart, or simply a {\it 1-handle}. 
We denote by $\sigma_i$ $(i=1,2)$ the standard generator of $B_3$, the braid group of degree 3, and we denote by $e$ the trivial braid in $B_3$. Note that this definition of $h(a,b)$ coincides with that of $h(\sigma_i,e)$ in the previous sections.

\begin{figure}[ht]
\centering
\includegraphics*[height=3.5cm]{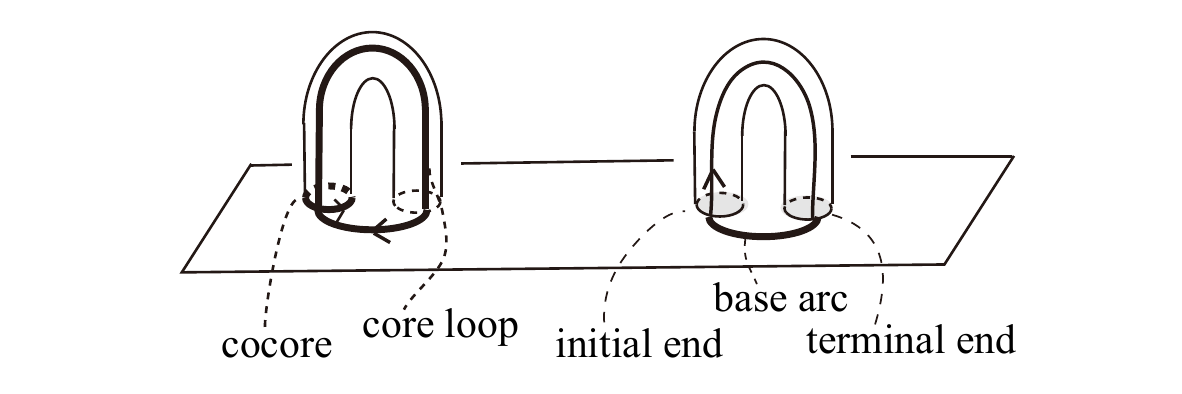}
\caption{The core loop, the cocore, the base arc, the initial end and the terminal end of a 1-handle.}
\label{fig13}
\end{figure}

We say that 1-handles with chart loops are {\it equivalent} if their presenting surfaces are equivalent, and we use the notation \lq\lq $\sim$'' to denote the equivalence relation. When we denote $h(a,b)$ for braids $a, b$, we assume that $a$ and $b$ commute.

We define the type of a white vertex as follows. For a white vertex $w$ such that the three arcs  oriented  toward $w$ consist of two arcs with the label $1$ and one arc with the label $2$ (respectively, two arcs with the label $2$ and one arc with the label $1$), we call $w$ a white vertex of {\it type} $(1,2)$ (respectively, of {\it type} $(2,1)$); see Fig. \ref{fig14}. 

\begin{figure}[ht] 
\includegraphics*[height=3cm]{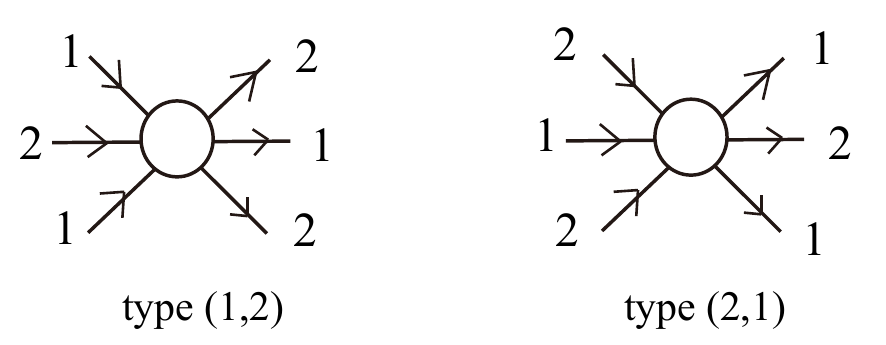}
\caption{A white vertex of type $(1,2)$ and a white vertex of type $(2,1)$.}
\label{fig14}
\end{figure}

In the following lemmas, Lemmas \ref{lem5-2}--\ref{lem7} are also used in \cite{N5, N7,N6}.

\begin{lemma}\label{lem5-2}
Let $(F, \Gamma)$ be a branched covering surface-knots of degree 3.
Let $E_1$ and $E_2$ be disks in $F$ such that $E_1 \cap \Gamma=E_2 \cap \Gamma=\emptyset$. Let $(F_i, \Gamma_i)$  be the result of an addition of $h(\sigma_1, a)+h(\sigma_2, b)$ on $E_i$ $(i=1,2)$. Then, 
$(F_1, \Gamma_1)$ and $(F_2, \Gamma_2)$ are equivalent. 
Thus, by equivalent deformation, $h(\sigma_1, a)+h(\sigma_2, b)$ moves anywhere. 
\end{lemma}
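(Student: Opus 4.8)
The plan is to reduce the statement to the problem of pushing the pair $h(\sigma_1,a)+h(\sigma_2,b)$ across a single edge of $\Gamma$, and then to carry out that crossing explicitly. Since $F$ is connected, I would first join the disks $E_1$ and $E_2$ by an embedded path $\rho$ in $F$. After a small perturbation I may assume that $\rho$ avoids the vertices of $\Gamma$ and the singular set of the surface diagram, and that it meets $\Gamma$ transversally in finitely many interior points of edges, say at points $x_1,\dots,x_m$ with labels $j_1,\dots,j_m\in\{1,2\}$. Sliding the attaching region of the pair along a chart-free sub-arc of $\rho$ is an ambient isotopy, so it suffices to show that, for each $k$, the pair can be carried across the single edge-point $x_k$ by a finite sequence of ambient isotopies, C-moves, and Roseman moves.

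For the crossing at a point where the edge has label $j$, the key geometric fact I would establish from the chart construction in Section~\ref{sec2} is that transporting the attaching region of a 1-handle across a label-$j$ double point curve changes the data of that handle by conjugation with $\sigma_j$. Thus, after the pair crosses $x_k$, the handle carrying $\sigma_j$ is unchanged, because conjugation by $\sigma_j$ fixes $\sigma_j$ and also fixes its commuting second entry; meanwhile the handle $h(\sigma_{3-j},c)$ becomes $h(\sigma_j\sigma_{3-j}\sigma_j^{-1},\sigma_j c\sigma_j^{-1})$. When $j$ equals the label of the handle itself, the passage is realized directly by a CI-M2 move between the handle loop and the edge (Fig.~\ref{fig5}), in the same manner as the elimination of a loop in Lemma~\ref{lem1}; the interesting case is the mismatched one.

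To repair the conjugation introduced in the mismatched case I would use the companion handle, which is precisely why the statement concerns the pair rather than a single handle. Having just crossed $x_k$, I bring the unchanged $h(\sigma_j,\cdot)$ next to $h(\sigma_j\sigma_{3-j}\sigma_j^{-1},\cdot)$ and perform a slide of one handle over the other; tracking the effect on the chart, this conjugates the braid of the second handle by $\sigma_j^{-1}$ and restores it to $h(\sigma_{3-j},c)$, its second entry returning to the commuting value. Since the pair carries loops of both labels, this correction is available for either value of $j$. After treating $x_1,\dots,x_m$ in turn, the pair arrives on $E_2$ in its original standard form, and since every step was an equivalence we conclude that $(F_1,\Gamma_1)$ and $(F_2,\Gamma_2)$ are equivalent; the final assertion that $h(\sigma_1,a)+h(\sigma_2,b)$ moves anywhere is the same statement phrased for an arbitrary target disk.

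I expect the main obstacle to be the explicit verification in the mismatched case: one must pin down the orientation- and over/under-dependence of the conjugation when the attaching disk crosses the double point curve, check that the second entries $a,b$ (which commute with $\sigma_1,\sigma_2$ respectively) transform consistently and return to admissible values, and realize the conjugation-correcting handle-slide by an honest sequence of C-moves and Roseman moves rather than by an abstract braid identity. A secondary point requiring care is genericity near $\Gamma$: I must ensure the perturbation of $\rho$ can always be arranged to miss the white and black vertices, so that each crossing is a clean transverse passage through an edge interior and no vertices of degree $2$ of $\Gamma$ interfere.
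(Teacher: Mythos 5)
Your overall strategy --- connect $E_1$ to $E_2$ by a path, reduce to carrying the pair across one chart edge at a time, handle the matched label by a CI-M2 move, and use the presence of both labels in the pair to deal with the other case --- is the right skeleton, and it matches the paper's proof up to the crucial step. The gap is in your treatment of the mismatched case. You propose to push the handle $h(\sigma_{3-j},c)$ bodily across a label-$j$ edge and record the effect as conjugation by $\sigma_j$, producing an intermediate handle $h(\sigma_j\sigma_{3-j}\sigma_j^{-1},\sigma_j c\sigma_j^{-1})$, to be repaired afterwards by a slide over the companion handle. But this transport is not realizable in a chart of degree $3$: the handle $h(\sigma_{3-j},c)$ carries a label-$(3-j)$ chart arc along its base arc in $E$, and for the handle to cross the edge $\rho$ that arc would have to cross a label-$j$ arc transversally, i.e.\ pass through a degree-$4$ crossing vertex. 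Such vertices exist only for labels $i,j$ with $|i-j|\geq 2$ (where $\sigma_i$ and $\sigma_j$ commute and the CII-moves apply); in degree $3$ the only labels are $1$ and $2$, $\sigma_1\sigma_2\sigma_1^{-1}\sigma_2^{-1}\neq e$, and no crossing vertex or CII-move is available. The same objection applies to the ``conjugation-correcting handle slide'': it would again require label-$1$ and label-$2$ arcs to pass through one another. So the mismatched crossing, which you yourself flag as the main obstacle, is exactly where the argument breaks.

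The paper's proof (see Fig.~\ref{fig19}) avoids any conjugation bookkeeping. To cross an edge $\rho$ of label $i$, apply a CI-M2 move between an arc of $\rho$ and the label-$i$ arc along the base arc of the \emph{matching} handle $h(\sigma_i,c)$; this reroutes $\rho$ onto that handle and opens a chart-free gap in $E$. The other handle is then moved \emph{under} $h(\sigma_i,c)$ through this gap by an ambient isotopy in the complement of the chart --- no chart arc is ever crossed --- and a second CI-M2 move restores $\rho$ with both handles on the far side, in their original form $h(\sigma_1,a)+h(\sigma_2,b)$. If you want to salvage your write-up, replace the ``cross and conjugate, then repair'' step by this gate mechanism; the rest of your reduction (choice of path, genericity near vertices and the singular set, induction on the intersection points $x_1,\dots,x_m$) can stand as is.
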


\begin{lemma}\label{lem5-1}
In a chart with no black vertices, the number of white vertices of type $(1,2)$ is the same with that of white vertices of type $(2,1)$. 
\end{lemma}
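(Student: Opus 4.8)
The plan is to prove the equality by a counting argument on the oriented edge-ends incident to white vertices, organized according to label. The starting point is the local combinatorics at a white vertex, which is governed by the single braid relation $\sigma_1\sigma_2\sigma_1=\sigma_2\sigma_1\sigma_2$ available in degree $3$: whichever of the two triples of coherently oriented arcs spells $\sigma_1\sigma_2\sigma_1$ carries the label multiset $\{1,1,2\}$, while the other spells $\sigma_2\sigma_1\sigma_2$ and carries $\{2,2,1\}$. By the definition of type (see Fig. \ref{fig14}), at a white vertex of type $(1,2)$ the three arcs oriented toward the vertex carry labels $\{1,1,2\}$ and the three oriented away carry $\{2,2,1\}$, and at a white vertex of type $(2,1)$ the situation is exactly reversed. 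Writing $a$ and $b$ for the numbers of white vertices of type $(1,2)$ and $(2,1)$ respectively, I can then tabulate the label-$1$ ends: a type-$(1,2)$ vertex contributes $2$ inward and $1$ outward label-$1$ ends, while a type-$(2,1)$ vertex contributes $1$ inward and $2$ outward. Summing over all white vertices, the total number of inward label-$1$ ends is $2a+b$ and the total number of outward label-$1$ ends is $a+2b$.

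The next step uses the hypothesis that $\Gamma$ has no black vertices in order to match these ends globally. Absorbing every degree-$2$ vertex, which we already regard as an interior point of a single edge, each maximal edge of $\Gamma$ is either a loop with no endpoints or an edge both of whose endpoints are white vertices, since the only vertices available are white vertices and degree-$2$ vertices. Because an edge is oriented, a non-loop label-$1$ edge meets its tail white vertex as an outward end and its head white vertex as an inward end, contributing exactly one to each total, and this remains true even if both of its endpoints happen to coincide at a single white vertex; a label-$1$ loop contributes nothing at any white vertex. Consequently the total number of inward label-$1$ ends equals the total number of outward label-$1$ ends at white vertices, that is, $2a+b=a+2b$, which forces $a=b$.

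The only point requiring care, and hence the main obstacle such as it is, is to make the end-matching airtight: one must verify that no label-$1$ arc can terminate anywhere other than a white vertex, which is precisely where the no-black-vertex hypothesis enters, and one should treat the degenerate case of an edge whose two endpoints coincide at a single white vertex so that it is counted once on each side rather than double-counted. Once these bookkeeping points are settled the identity $a=b$ is immediate, and repeating the count verbatim with the label $2$ in place of the label $1$ yields the same equation as a consistency check.
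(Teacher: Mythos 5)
Your proof is correct and follows essentially the same route as the paper's: the paper also counts label-$1$ arcs oriented toward and away from white vertices (two in, one out for type $(1,2)$; one in, two out for type $(2,1)$) and uses the absence of black vertices to equate the two totals. Your version merely makes the bookkeeping (the equation $2a+b=a+2b$, the treatment of loops and of edges with coinciding endpoints) explicit where the paper leaves it implicit.
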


\begin{lemma}\label{lem3-1a}
Let $\rho$ be a chart loop with the label $1$. 
If there is a 1-handle $h(\sigma_1, b)$ near a neighborhood of an arc of $\rho$, then we can eliminate $\rho$ using $h(\sigma_1,b)$ and a CI-M2 move, where $b$ is a braid. \end{lemma}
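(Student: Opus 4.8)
The plan is to reduce this to Lemma \ref{lem1} (equivalently, to the elimination pictured in Fig. \ref{fig1}) by isolating the part of $h(\sigma_1, b)$ that actually participates in the CI-M2 move and checking that the braid $b$ stays out of the way.

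First I would recall how $\rho$ is removed in the case $b=e$. Near the chosen arc of $\rho$, the $1$-handle $h(\sigma_1, e)$ presents a parallel arc with label $1$ and opposite orientation; a CI-M2 move reconnects the two label-$1$ arcs, and then sliding one end of the handle along $\rho$ sweeps $\rho$ away, leaving the handle behind and deleting $\rho$ from the chart, as in Fig. \ref{fig1}.

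Next I would observe that the local picture at the arc of $\rho$ is unchanged for $h(\sigma_1, b)$. In the notation $h(a,b)$ the cocore presents $a=\sigma_1$, so in a neighborhood of the arc of $\rho$ the chart of $h(\sigma_1, b)$ still consists of a single arc with label $1$, exactly as for $h(\sigma_1, e)$. Hence the same CI-M2 move between this arc and the arc of $\rho$ is available and has the same local effect.

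The main point to verify, and the step I expect to be the obstacle, is that the braid $b$, which is carried by the core loop away from the site of the CI-M2 move, does not interfere when one slides the end of the handle along $\rho$ to finish the elimination. This is where the standing assumption that $b$ commutes with $\sigma_1$ (built into the notation $h(\sigma_1, b)$) is used: the chart arcs realizing $b$ can be kept disjoint from the label-$1$ arc realizing $\sigma_1$ throughout the slide, so that no new crossings arise and no extra C-moves involving $b$ are required. Therefore the same sequence of deformations as for $h(\sigma_1, e)$ eliminates $\rho$ while leaving $h(\sigma_1, b)$ intact, which is the assertion.
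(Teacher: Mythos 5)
Your proof is correct and follows the paper's approach exactly: the paper's own proof of Lemma~\ref{lem3-1a} simply invokes the same argument as for Lemma~\ref{lem1}, namely a CI-M2 move between the label-$1$ arc of the handle and an arc of $\rho$, followed by sliding an end of the handle along $\rho$ as in Fig.~\ref{fig1}. One small caveat: the reason $b$ does not interfere is not really the commutativity of $b$ with $\sigma_1$, but simply that the chart loops presenting $b$ (and any white vertices they entail, as in the case $h(\sigma_1,(\sigma_2\sigma_1\sigma_1\sigma_2)^n)$ to which the lemma is actually applied) lie on the tube of the handle away from its ends and its base arc, so they are untouched by the CI-M2 move and by the slide of the end.
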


The following lemma requires the condition that $h$ is a trivial 1-handle; see Remarks \ref{rem-2} and \ref{rem1}. 

\begin{lemma}\label{lem7}
If $h$ is a trivial 1-handle, then
\[
h(\sigma_2, e) \sim h(e,\sigma_2).
\]
\end{lemma}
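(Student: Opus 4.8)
The plan is to reduce the identity to a symmetry of a trivial tube and to realize that symmetry by Roseman moves and C-moves. Since $h$ is a \emph{trivial} $1$-handle, I may use Remark \ref{rem-2} and place $h$ inside a $3$-ball $B^3$ meeting $F$ only in $h$, so that, after forgetting the chart, $F+h$ is a standardly embedded, unknotted and untwisted tube. On this tube the cocore is a meridian and the core loop is a longitude, as in Fig. \ref{fig13}, and these are precisely the two closed curves on which the two entries of $h(\cdot,\cdot)$ record the braiding.

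First I would describe both sides as branched coverings over $F+h$. In each of $h(\sigma_2,e)$ and $h(e,\sigma_2)$ the sheet labeled $1$ forms a separate trivial tube that is left untouched, while the sheets labeled $2$ and $3$ form the connected double cover of the tube whose monodromy is the transposition along exactly one of the two distinguished loops: the meridian (the cocore) for $h(\sigma_2,e)$, and the core loop (the longitude) for $h(e,\sigma_2)$. Thus the two presented surfaces differ only in which generator of the first homology of the handle carries the nontrivial double-cover monodromy, and the content of the lemma is that, for a trivial handle, these two choices give equivalent surfaces.

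The key step is then to exchange the meridian and the core loop. There is a self-homeomorphism of the tube interchanging these two curves, and because $h$ is trivial this interchange can be realized on the branched covering surface by a finite sequence of Roseman moves and CI-moves, equivalently by an ambient isotopy of $\mathbb{R}^4$ supported near $B^3$. Carrying out the exchange transports the double-cover monodromy from the cocore to the core loop while leaving the sheet-$1$ tube alone, which is exactly the passage from $h(\sigma_2,e)$ to $h(e,\sigma_2)$. I would present this as an explicit movie: push the meridional chart loop along the length of the tube, use the triviality of $h$ to bring its two ends together inside $B^3$, and recombine the pushed loop into a longitudinal one by a CI-M2 move, reading labels and orientations off the conventions of Fig. \ref{fig13}.

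I expect the main obstacle to be twofold. The genuine work is to exhibit the meridian--longitude interchange by \emph{allowed} modifications rather than by an abstract diffeomorphism, and to verify that triviality is exactly what supplies the isotopy room needed to slide the loop around and close it up; this is the point flagged in Remark \ref{rem-2}, and the equivalence indeed fails for a knotted or linked handle. The remaining delicate point is the bookkeeping of orientations and of the half-twist: the exchange can reverse an orientation, so one must confirm that the braid produced on the core loop is $\sigma_2$ rather than $\sigma_2^{-1}$, using the orientation of the cocore, the direction of the core, and the choice of base arc recorded in Fig. \ref{fig13}.
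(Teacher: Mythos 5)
Your overall strategy is the paper's: the lemma is exactly the statement that, for a trivial handle, the roles of the cocore (meridian) and the core loop (longitude) can be interchanged, and the paper proves it by turning the trivially embedded handle so that the core becomes the cocore and the cocore becomes the orientation-reversed core (Fig.~\ref{fig25}, with details deferred to the proof of Lemma~4.4 of \cite{N5}). The orientation ambiguity you flag at the end is harmless here, since $h(e,b)\sim h(e,b^{-1})$ (Lemma~\ref{lem8}). So the correct mechanism does appear in your sketch, namely the ambient isotopy of $\mathbb{R}^4$ supported near $B^3$ realizing the meridian--longitude interchange of the tube.

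The step that would fail is the concluding ``movie.'' A meridional chart loop and a longitudinal chart loop represent different classes in $H_1(F+h)$, and C-moves are supported in disks, so the total homology class carried by the label-$2$ loops is invariant under them; in particular no amount of pushing the meridional loop along the tube followed by a CI-M2 move (which only creates or cancels a pair of parallel, oppositely oriented arcs) can convert a meridian into a longitude on a fixed embedding of $F+h$. The interchange is not a chart move at all: it is a re-embedding of the handle itself, i.e.\ precisely the ambient isotopy you invoke in the preceding sentence, and your own caveat correctly identifies this as the genuine content. The paper supplies it by the core/cocore rotation of the trivial handle rather than by manipulating the chart, so you should drop the CI-M2 recombination and instead make the rotation of Fig.~\ref{fig25} explicit (or cite \cite{N5}).
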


When a 1-handle $h$ has collected several white vertices, and the chart arc along the base arc of $h$ has the orientation coherent with the base arc and labeled by $1$, $h$ has the presentation $h(\sigma_1, (\sigma_2 \sigma_1 \sigma_1 \sigma_2)^n)$ for a non-zero integer $n$. The 1-handle $h$ has $2|n|$ white vertices such that the chart loops parallel to the cocore have the coherent orientations as illustrated in Fig. \ref{fig15}. 

\begin{figure}[ht] 
\includegraphics*[height=3cm]{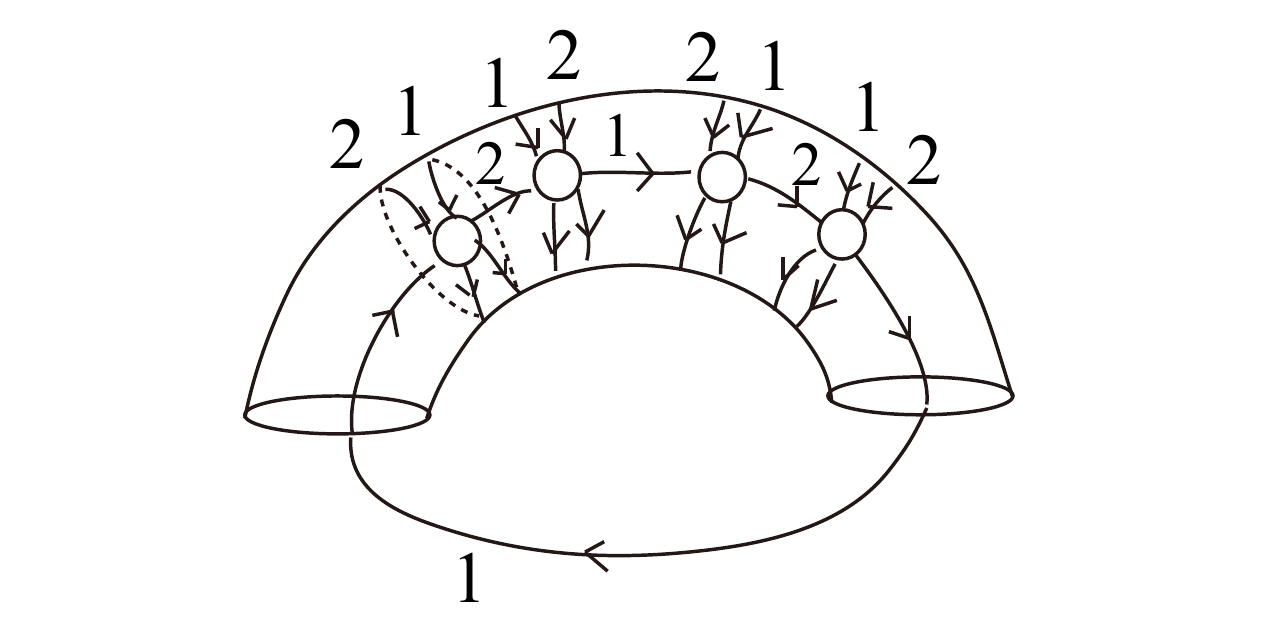}
\caption{A 1-handle $h(\sigma_1, (\sigma_2 \sigma_1 \sigma_1 \sigma_2)^n)$, where $n=2$.}
\label{fig15}
\end{figure}

\begin{lemma}\label{lem5}
Let $n$ be a non-zero integer. 
Let $h=h(\sigma_1, (\sigma_2 \sigma_1 \sigma_1 \sigma_2)^n)$. 

\begin{enumerate}[$(1)$]
\item
The 1-handles $h+h(\sigma_2,e)$ deforms to $h(\sigma_1,(\sigma_2 \sigma_1 \sigma_1 \sigma_2)^{n-1}))+h(e, \sigma_2)$ if $n>0$, or $h(\sigma_1,(\sigma_2 \sigma_1 \sigma_1 \sigma_2)^{n+1}))+h(e, \sigma_2)$ if $n<0$, by equivalent deformation.  

\item
The 1-handles $h+h(e, \sigma_2)$ deforms to $h(\sigma_1,(\sigma_2 \sigma_1 \sigma_1 \sigma_2)^{n-1}))+h(\sigma_2,e)$ if $n>0$, or $h(\sigma_1,(\sigma_2 \sigma_1 \sigma_1 \sigma_2)^{n+1}))+h(\sigma_2,e)$ if $n<0$, by equivalent deformation. 
\end{enumerate}

\end{lemma}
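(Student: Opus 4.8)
The plan is to prove part (1) for $n>0$ in full and to obtain the remaining three cases by analogous arguments: reflecting the orientations of all edges turns the $n>0$ statement into the $n<0$ statement (with the exponent changing by $+1$ rather than $-1$), and running the collection in the opposite direction, which interchanges the roles of the cocore and the core loop of the auxiliary handle and hence exchanges $h(\sigma_2,e)$ with $h(e,\sigma_2)$, turns part (1) into part (2). So the whole content is the single reduction $h(\sigma_1,(\sigma_2\sigma_1\sigma_1\sigma_2)^n)+h(\sigma_2,e)\sim h(\sigma_1,(\sigma_2\sigma_1\sigma_1\sigma_2)^{n-1})+h(e,\sigma_2)$ for $n>0$.

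First I would use Lemma \ref{lem5-2} to bring the auxiliary handle $h(\sigma_2,e)$ adjacent to $h$ and to position its end next to the outermost factor $\sigma_2\sigma_1\sigma_1\sigma_2$ of the core loop of $h$. By the description preceding the lemma and by Fig. \ref{fig15}, this factor is carried by exactly two white vertices $w_1,w_2$ nearest the auxiliary handle, which by Lemma \ref{lem5-1} are of opposite type. Since $w_1$ meets a non-middle arc labelled $2$, I would apply a CI-M2 move between that arc and the label-$2$ arc of $h(\sigma_2,e)$ and then invoke Lemma \ref{lem2} to collect $w_1$, and subsequently $w_2$, onto the auxiliary handle. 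After this collection $h$ has lost one $\sigma_2\sigma_1\sigma_1\sigma_2$-block and presents $h(\sigma_1,(\sigma_2\sigma_1\sigma_1\sigma_2)^{n-1})$ (its number of white vertices dropping from $2n$ to $2n-2$), while the auxiliary handle now carries the pair $(w_1,w_2)$.

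Next I would cancel that pair. By Lemma \ref{lem3} the act of recollecting $w_1,w_2$ onto the auxiliary handle reverses the orientations of the edges joining the two vertices, as in Fig. \ref{fig11}, so that $w_1$ and $w_2$ present opposite orientations; then the pair-elimination procedure of Fig. \ref{fig12} (a CI-M3' move followed by two CI-M1 moves) removes $(w_1,w_2)$. The closing step is the bookkeeping that identifies what survives on the auxiliary handle as exactly $h(e,\sigma_2)$: the generator $\sigma_2$ originally on the cocore is transported onto the core loop during the collection, and Lemma \ref{lem3-1a} (together with Lemma \ref{lem7}, used only while the handle is trivial) absorbs any residual label-$2$ loops into the single core-loop generator.

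I expect the main obstacle to be precisely this last bookkeeping: verifying, through the braid relation $\sigma_1\sigma_2\sigma_1=\sigma_2\sigma_1\sigma_2$ encoded at each white vertex together with the orientation conventions of Figs. \ref{fig9}--\ref{fig11}, that the surviving generator lands on the core loop rather than on the cocore. This is what genuinely separates $h(e,\sigma_2)$ from $h(\sigma_2,e)$ in the conclusion; and since Lemma \ref{lem7} cannot be applied freely here (the ambient handle need not be trivial, cf.\ Remark \ref{rem-2}), the placement of the leftover $\sigma_2$ must be read off from the explicit sequence of C-moves rather than asserted by a normal-form identity.
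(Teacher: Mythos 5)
There is a genuine gap, and it sits exactly where you predicted it would. The entire content of Lemma \ref{lem5} is the distinction between $h(\sigma_2,e)$ and $h(e,\sigma_2)$ in the output, and your argument never establishes which one survives: you collect $(w_1,w_2)$ onto the auxiliary handle, cancel them, and then say the ``bookkeeping'' identifying the remainder as $h(e,\sigma_2)$ is ``the main obstacle'' to be ``read off from the explicit sequence of C-moves.'' That reading-off is the proof; without it nothing is shown. The paper resolves it differently and more computably: it collects \emph{all} $2n$ white vertices onto $h'$ (not just the outermost pair), which turns both handles into explicit braid-word presentations, namely $h(e,\sigma_1\sigma_2(\sigma_2\sigma_1\sigma_1\sigma_2)^{n-1})+h(\sigma_1,\sigma_2^{-1}\sigma_1^{-1}\sigma_1\sigma_2(\sigma_2\sigma_1\sigma_1\sigma_2)^{n-1})$, and then reduces the first word to $\sigma_2$ by the algebraic moves $h(e,b)\sim h(e,b^{-1})$ (Lemma \ref{lem8}) and $h(e,c)+h(a,b)\sim h(e,bc)+h(a,b)\sim h(e,a^{-1}c)+h(a,b)$ (Lemma \ref{lem9}). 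You do not invoke Lemmas \ref{lem8} and \ref{lem9} at all, and they are precisely the tools that make the placement of the leftover generator verifiable rather than asserted. Note also that after collection the cocore of $h$ does not simply ``lose one block'': it becomes $\sigma_2^{-1}\sigma_1^{-1}\sigma_1\sigma_2(\sigma_2\sigma_1\sigma_1\sigma_2)^{n-1}$ and needs a further CI-M3$'$/CI-M1 cleanup (or the uniqueness of $h(a,b)$) before it equals $(\sigma_2\sigma_1\sigma_1\sigma_2)^{n-1}$.

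The second problem is your reduction of part (2) to part (1). Interchanging the roles of the core loop and the cocore of a 1-handle is exactly the content of Lemma \ref{lem7}, which is valid only for \emph{trivial} 1-handles (Remark \ref{rem-2}); the handles in Lemma \ref{lem5} have had their ends slid around the chart and cannot be assumed trivial. If that swap were freely available, $h(\sigma_2,e)\sim h(e,\sigma_2)$ would hold unconditionally, parts (1) and (2) would be literally the same statement, and the careful case division on the parity of $n$ in the proof of Theorem \ref{thm3} (together with Remark \ref{rem1}) would be pointless. The same objection applies to your closing suggestion that Lemma \ref{lem7} ``absorbs residual label-2 loops'': this begs the question the lemma is designed to answer. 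The paper instead proves (2) by a separate geometric argument --- converting $h(e,\sigma_2)$ to $h(e,\sigma_2\sigma_1)$ via Lemmas \ref{lem8} and \ref{lem9}, cancelling a pair of white vertices with a CI-M3$''$ move, and then explicitly tracing the label-1 arcs to form a path along which an end of $h$ is slid (Figs.\ \ref{fig22}--\ref{fig24}). A final minor point: Lemma \ref{lem5-1} is a global count of vertex types and does not by itself tell you that the two vertices nearest the auxiliary handle have opposite types; that follows from the local structure of $h(\sigma_1,(\sigma_2\sigma_1\sigma_1\sigma_2)^n)$ shown in Fig.\ \ref{fig15}.
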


\subsection{Proofs of Theorems \ref{thm3} and \ref{thm4}}\label{sec4-3}

First we show Theorem \ref{thm4}, and then we show Theorem \ref{thm3}.

\begin{proof}[Proof of Theorem \ref{thm4}]
By Lemma \ref{lem5} (2), we have the required result. 
\end{proof}

\begin{proof}[Proof of Theorem \ref{thm3}]
First we show the result for the case when $\Gamma$ has black vertices. 
If $\Gamma$ has a positive number of black vertices, then, by Theorem \ref{thm4}, by equivalence, we deform $\Gamma$ to the form consisting of several free edges and chart loops. We take a free edge $f$, and let $i$ $(i \in \{1,2\})$ be the label of $f$. We add $h=h(\sigma_j, e)$ $(\{j\}=\{1,2\} \backslash \{i\})$ in a neighborhood of $f$. 
By Lemma \ref{lem1}, $h$ eliminates a chart loop of the label $j$ which has an arc in a neighborhood of $h$. Similarly, $f$ eliminates a chart loop of the label $i$ which has an arc in a neighborhood of $f$, by a CI-M2 move and an ambient isotopy as illustrated in Fig. \ref{fig16}. Hence, using $h$ or $f$, we eliminate all chart loops from the one in a neighborhood of $h$ and $f$. The result is free edges and $h$, which is a simplified form. Thus the simplifying number $u(F, \Gamma)$ satisfies $u(F, \Gamma) \leq 1$, which is the required result.  
\begin{figure}[ht]
\centering
\includegraphics*[height=2cm]{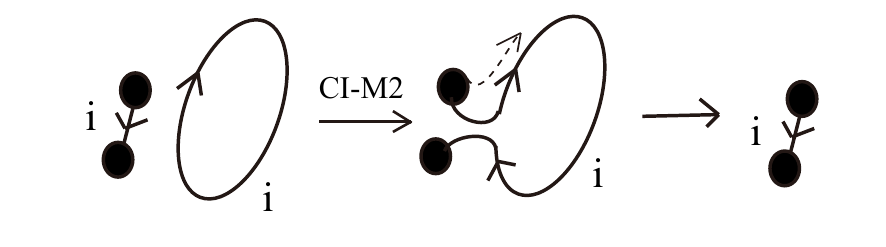}
\caption{Eliminating a chart loop using a free edge.}
\label{fig16}
\end{figure}

We show the result for the case when $\Gamma$ has no black vertices. 
Let $h=h(\sigma_1, e)$ and $h'=h(\sigma_2, e)$. 
We add $h+h'$ in a neighborhood of a non-middle arc of the label 1 of a white vertex $w$ of type $(1,2)$. By Lemma \ref{lem5} (1), there is a closed path with the base point near $w$ such that several white vertices including $w$ are collected on a 1-handle by applying a CI-M2 move and sliding one end of the 1-handle along the path. We collect the white vertices on $h$. Then we have $h=h(\sigma_1, (\sigma_2 \sigma_1 \sigma_1 \sigma_2)^n)$ for some integer $n$. By Lemma \ref{lem5-2}, we move $h+h'$ to a neighborhood of a non-middle arc of the label 1 of a white vertex of type $(1,2)$ of the resulting $\Gamma$, and by the same process, we collect white vertices on $h$. Note that since the number of white vertices of type $(1,2)$ is the same as that of white vertices of type $(2,1)$ by Lemma \ref{lem5-1}, if there are white vertices other than those on $h$, then there exists a white vertex of type $(1,2)$. We repeat this process until we collect all white vertices on $h$. The result is $h(\sigma_1, (\sigma_2 \sigma_1 \sigma_1 \sigma_2)^n)+h(\sigma_2, e)$ for some integer $n$ and several chart loops with the label 1 or 2. By Lemmas \ref{lem1} and  \ref{lem3-1a}, applying CI-M2 moves to the chart loops and the arc of $h(\sigma_1, (\sigma_2 \sigma_1 \sigma_1 \sigma_2)^n)$ or $h(\sigma_2, e)$, we eliminate the loops. Thus we have 
\begin{equation}\label{eq-sigma}
h(\sigma_1, (\sigma_2 \sigma_1 \sigma_1 \sigma_2)^n)+h(\sigma_2, e). 
\end{equation}
If $n$ is even, then, applying Lemma \ref{lem5}, we have $h(\sigma_1, e)+h(\sigma_2, e)$, which is a simplified form. 
If $n$ is odd, then applying Lemma \ref{lem7}, we have
\begin{equation}\label{eq-sigma2}
h(\sigma_1, (\sigma_2 \sigma_1 \sigma_1 \sigma_2)^n)+h(e, \sigma_2);
\end{equation}
see Remark \ref{rem1}. 
 Then, applying Lemma \ref{lem5}, in this case also we have $h(\sigma_1, e)+h(\sigma_2, e)$, which is a simplified form. 
Thus the simplifying number $u(F, \Gamma)$ satisfies $u(F, \Gamma) \leq 2$, which is the required result. 
\end{proof}

\begin{remark}\label{rem1}
In order to consider branched covering surface-knots $(F, \Gamma)$ when $F$ may be knotted, we must be careful to remark when 1-handles are trivial, since Lemma \ref{lem7} requires the condition that the concerning 1-handle is a trivial 1-handle. We first add trivial 1-handles with chart loops, $h(\sigma_1, e)+h(\sigma_2, e)$. When we slide ends of a 1-handle $h=h(\sigma_1, e)$ and collect white vertices, $h$ may become knotted. However, since we don't slide ends of $h(\sigma_2, e)$, $h(\sigma_2, e)$ in (\ref{eq-sigma}) is a trivial 1-handle, and hence we can apply Lemma \ref{lem7} and we obtain (\ref{eq-sigma2}). 
\end{remark}

\section{Proofs of Lemmas \ref{lem4}--\ref{lem5}}\label{sec5}
First we prove Lemmas \ref{lem5-2}--\ref{lem5}, and then we prove Key Lemma \ref{lem4}. 

For a chart edge/arc, we call the endpoint with the orientation from it (respectively, toward it) the {\it initial point} (respectively, the {\it terminal point}). 

\subsection{Proofs of Lemmas \ref{lem5-2}--\ref{lem5}}\label{sec5-1}
Lemmas \ref{lem5-2}--\ref{lem7} are used in \cite{N5, N7, N6}. 
In this section, we give the proofs of Lemmas \ref{lem5-2} and  \ref{lem3-1a} because we use similar methods to prove Key Lemma \ref{lem4}. We also give the proofs of Lemmas \ref{lem5-1} and \ref{lem7} to make this paper self-contained. Figures similar to Figs. \ref{fig17}--\ref{fig20} are in \cite{N5, N7, N6}, especially in \cite{N5, N6}. 
Before the proofs, we give two lemmas which are  also used in \cite{N5, N7, N6}. 
In Section \ref{sec1}, we used $h(\sigma_1, e) \sim h(\sigma_1^{-1}, e)$; see Fig. \ref{fig1}. We have a similar lemma. 

\begin{lemma}\label{lem8}
For a braid $b$, 
\begin{eqnarray}
h(e, b) \sim h(e, b^{-1}).
\end{eqnarray}
\end{lemma}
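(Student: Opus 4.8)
The statement to prove is Lemma~\ref{lem8}: for a braid $b$, we have $h(e,b) \sim h(e,b^{-1})$.

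\medskip

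The plan is to exploit the geometric meaning of the second slot of the notation $h(a,b)$, namely that $b$ is the braid presented by the \emph{core loop with the reversed orientation}. Since the core loop is a closed oriented path, reversing its orientation is a purely geometric operation: it corresponds to reading the same chart arcs along the core in the opposite direction. First I would recall from the definition in Section~\ref{sec4-2} that $h(e,b)$ has an empty chart on its cocore (the first slot is $e$) and a chart along the core loop presenting $b$. The key observation is that there is an ambient isotopy of $\mathbb{R}^4$ that \emph{turns the 1-handle around}---swapping the roles of the initial end $B^2\times\{0\}$ and the terminal end $B^2\times\{1\}$---exactly the move already used in Section~\ref{sec1} (the second row of Fig.~\ref{fig1}) to realize $h(\sigma_1,e)\sim h(\sigma_1^{-1},e)$.

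\medskip

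The concrete steps I would carry out are as follows. I would take the 1-handle $h = B^2\times I$ and apply the $180^\circ$ rotation that reverses the $I$-direction, carrying $B^2\times\{0\}$ to $B^2\times\{1\}$ and reversing the oriented core $\rho(t)$. Under this isotopy the chart drawn along the core is traversed in the opposite direction, so the braid word read off the core loop is replaced by its reverse-orientation reading, which is precisely the inverse braid $b^{-1}$ (each crossing $\sigma_i^{\pm}$ becomes $\sigma_i^{\mp}$ as the normal-vector convention flips, and the order of letters reverses, matching inversion in $B_3$). Since the first slot is $e$---the cocore carries no chart---turning the handle around does not introduce any chart along the cocore, so the first slot remains $e$. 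I would then check that this rotation is realizable as an ambient isotopy rel the relevant base data, so that the two presentations give equivalent surfaces, which yields $h(e,b)\sim h(e,b^{-1})$.

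\medskip

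I expect the main obstacle to be the bookkeeping of orientations and labels under the handle reversal: one must verify carefully that reversing the core orientation sends the presented braid to its group-theoretic inverse $b^{-1}$ (not merely its reverse word), and that the normal-vector rule assigning orientations to chart edges---defined in Section~\ref{sec2-2}---transforms consistently so that no spurious sign or label change appears. Because the analogous statement $h(\sigma_1,e)\sim h(\sigma_1^{-1},e)$ for the cocore slot was already established in Section~\ref{sec1} via the same turning-around move, I would expect the argument here to be its mirror image with the roles of core and cocore interchanged, so the routine verification should go through by the same diagrammatic reasoning.
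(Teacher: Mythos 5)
Your proposal is correct and matches the paper's own argument: the paper proves Lemma~\ref{lem8} by turning the base disk $E$ (equivalently, the 1-handle) around so that the core loop's orientation is reversed, whence the braid read along it becomes $b^{-1}$ while the empty cocore slot is unaffected. Your explicit check that reversing the reading direction inverts each crossing and reverses the word order, giving the group-theoretic inverse, is exactly the content of the paper's ``orientation-reversed core loop'' step.
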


\begin{proof}
Let $E$ be a 2-disk where the ends of $h(e,b)$ are attached. 
Turning $E$ around and regarding the orientation-reversed core loop as a new core loop, we have $h(e, b^{-1})$: thus $h(e, b) \sim h(e, b^{-1})$; see Fig. \ref{fig17}.  
\end{proof}

\begin{figure}[ht]
\centering
\includegraphics*[height=2cm]{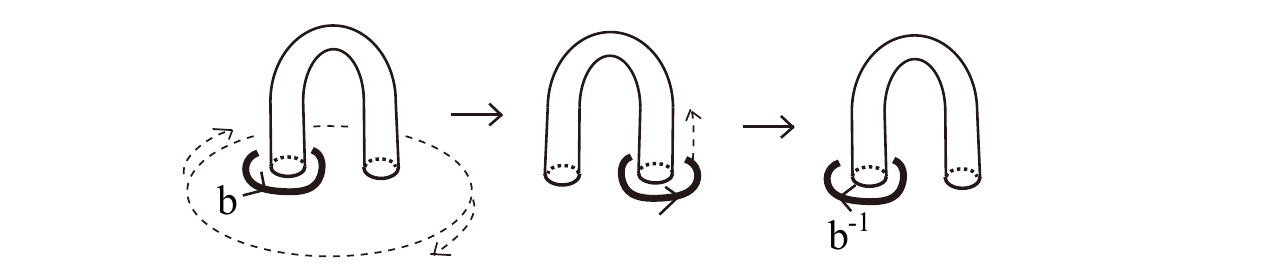}
\caption{The image of $h(e,b) \sim h(e,b^{-1})$, where actually the chart consists of several chart loops along the cocore presenting the braid $b$. }
\label{fig17}
\end{figure}

\begin{lemma}\label{lem9}
For braids $a$, $b$ and $c$,  
\begin{eqnarray}
h(e,c)+h(a,b) & \sim & h(e,bc)+h(a,b) \label{eq5-2}\\
 & \sim & h(e,a^{-1}c)+h(a, b) \label{eq5-3}. 
\end{eqnarray}
\end{lemma}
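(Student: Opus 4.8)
The plan is to prove both equivalences by a \emph{handle slide}: I realize $h(e,c)$ and $h(a,b)$ on a common $2$-disk $E$ carrying no other part of the chart, and I slide one foot of $h(e,c)$ across the $1$-handle $h(a,b)$. The two identities (\ref{eq5-2}) and (\ref{eq5-3}) correspond to the two independent directions in which the foot can be pushed over $h(a,b)$: along the core loop of $h(a,b)$, or across its cocore. Throughout I read braid words off the cocore and the core loop exactly as prescribed in Section~\ref{sec4-2}, remembering that by definition the cocore of $h(a,b)$ presents $a$ with its given orientation while the core loop presents $b$ with the \emph{reversed} orientation. This built-in asymmetry is what produces $b$ in one identity and $a^{-1}$ in the other. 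The handles are attached on $E$ as in the configurations used for Figs.~\ref{fig17}--\ref{fig20}, and I would record each step with such a figure.

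For (\ref{eq5-2}), I would slide the foot of $h(e,c)$ that sits at the base point of its core loop along a path that runs once around the core loop of $h(a,b)$ (up the handle $h(a,b)$ and back to $E$ along its base arc) and returns to its starting position. This lengthens the core loop of $h(e,c)$ by a round trip whose braid reading is that of the core loop of $h(a,b)$, namely $b$. The cocore of $h(e,c)$ is unaffected, and $h(a,b)$ is left untouched because $a$ and $b$ commute, so the configuration remains a pair of well-defined $1$-handles at every stage. Reading the new core loop of $h(e,c)$ with the convention of Section~\ref{sec4-2} then gives the word $bc$; if the traversal picks up the core loop in the orientation opposite to the one we want, I would invoke Lemma~\ref{lem8} to flip $b$ to $b^{-1}$ and restore the stated order. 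The result is $h(e,bc)+h(a,b)$.

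For (\ref{eq5-3}), I would instead push the same foot of $h(e,c)$ across the cocore $\partial B^2 \times \{0\}$ of $h(a,b)$, i.e. through the attaching region of $h(a,b)$ rather than over its core. Now the lengthening round trip of the core loop of $h(e,c)$ reads the cocore braid of $h(a,b)$; since the cocore presents $a$ with its given orientation whereas a core loop is measured with the reversed orientation, this contributes $a^{-1}$, and the core loop braid of $h(e,c)$ becomes $a^{-1}c$. As before the cocore of $h(e,c)$ stays trivial and $h(a,b)$ is unchanged, giving $h(e,a^{-1}c)+h(a,b)$.

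The routine geometry of the slide is clear; the delicate part, and the step I expect to be the main obstacle, is the orientation and multiplication-order bookkeeping that distinguishes $bc$ from $cb$ and $a^{-1}$ from $a$. This hinges on three conventions fixed earlier: the rule relating the sign $\epsilon$ to the normal vector of a crossing path, the choice of base arc that closes a core into a core loop (Section~\ref{sec4-2}), and the direction in which the sliding foot traverses the core loop or cocore of $h(a,b)$. I would nail these down on the figures analogous to Figs.~\ref{fig17}--\ref{fig20}, using Lemma~\ref{lem8} as the tool for reversing an orientation whenever the natural reading comes out inverted, and using the commutativity of $a$ and $b$ to guarantee that each intermediate diagram is a legitimate $1$-handle with a chart.
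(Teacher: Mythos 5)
Your proposal is correct and takes essentially the same route as the paper: both identities are obtained by sliding one end of $h(e,c)$ --- along the core of $h(a,b)$ to pick up $b$ for (\ref{eq5-2}), and through the chart arcs at the attaching region (the edges along the base arc, which present the cocore braid $a$) to pick up $a^{-1}$ for (\ref{eq5-3}) --- with the orientation bookkeeping carried by figures exactly as you anticipate. The one detail you omit is that the second slide leaves the cores of the two $1$-handles crossed, so the paper finishes with a crossing change of the cores, justified as an equivalent deformation because the handles sit in $4$-space.
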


\begin{proof}
We show (\ref{eq5-2}). Sliding the initial end of $h(e,c)$ along the core of $h(a,b)$ and applying CI-M2 moves as in the first row of Fig. \ref{fig18}, $h(e,c)+h(a,b)$ becomes $h(e, bc)+h(a,b)$.  

We show (\ref{eq5-3}). Moving the initial end of $h(e,c)$ through the chart edges along the base arc of $h(a,b)$ and applying CI-M2 moves, $h(e,c)+h(a,b)$ becomes $h(e, a^{-1}b)+h(a,b)$; see the second row of Fig. \ref{fig18}. In Fig. \ref{fig18}, the crossing change is the deformation such that the crossing of the cores of the 1-handles are changed. Since the 1-handles are in 4-space, this deformation is an equivalent deformation. See \cite[Claim 4.1]{N5} and the proof of \cite[Lemma 4.6]{N5}.
\end{proof}

\begin{figure}[ht]
\centering
\includegraphics*[width=13cm]{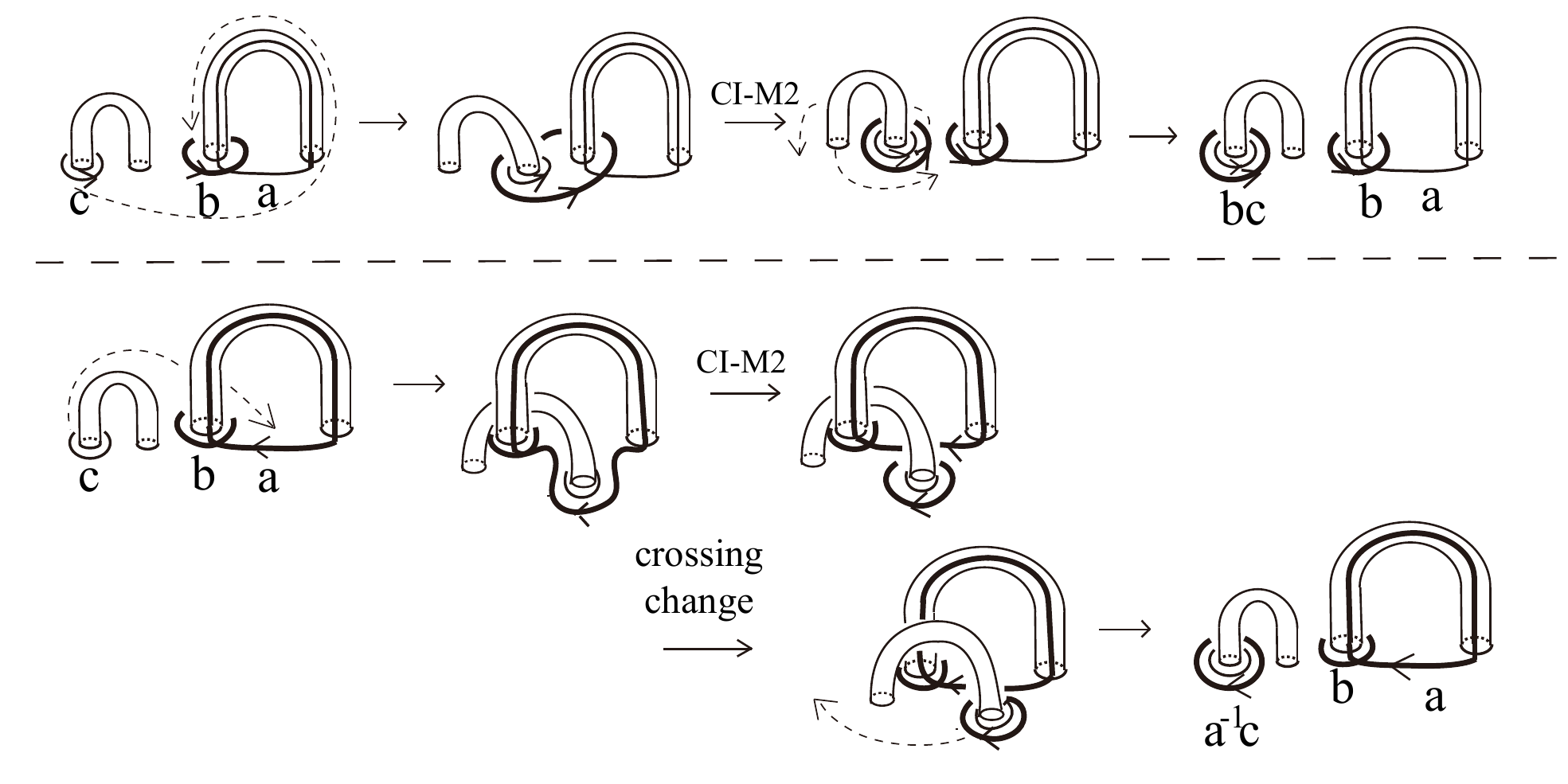}
\caption{The image of $h(e,c)+h(a,b) \sim h(e,bc)+h(a,b)$ (the first row) and $h(e,c)+h(a,b) \sim h(e,a^{-1}c)+h(a,b)$ (the second row). } 
\label{fig18}
\end{figure}

\begin{proof}[Proof of Lemma \ref{lem5-2}]
Let $i$ $(i \in \{1,2\})$ be the label of a chart edge $\rho$ in whose neighborhood we have $h(\sigma_1, a)+h(\sigma_2,b)$. Apply a CI-M2 move between an arc of $\rho$ and an arc along the base arc of $h=h(\sigma_i,c)$ $(c=a$ or $b)$, move the other 1-handle under $h$ to the other side of $\rho$, and by a CI-M2 move again, let $h$ be on the other side of $\rho$; see Fig. \ref{fig19}. Thus $h(\sigma_1, a)+h(\sigma_2,b)$ moves through any chart edge with any label. Hence we have the required result. 
\end{proof}

\begin{figure}[ht]
\centering
\includegraphics*[height=2.5cm]{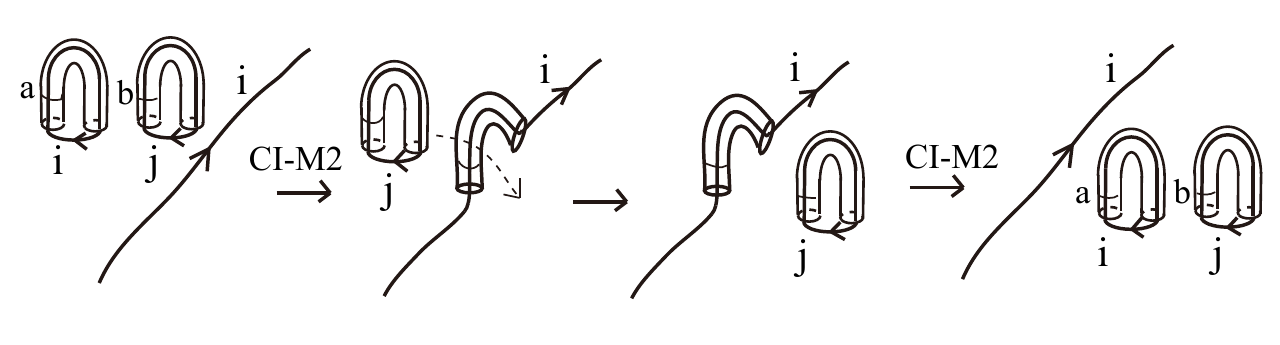}
\caption{The set of 1-handles $h(\sigma_1, a)+h(\sigma_2,b)$ moves anywhere, where $\{i,j\}=\{1,2\}$.}
\label{fig19}
\end{figure}

\begin{proof}[Proof of Lemma \ref{lem5-1}]
A white vertex of type $(1,2)$ (respectively, $(2,1)$) have two arcs (respectively, one arc) with the label 1
 oriented toward the vertex, and one arc (respectively, two arcs) with the label 1 oriented from the vertex. 
Since we have no black vertices, counting the numbers of arcs with the label 1, we see that the numbers of white vertices of type $(1,2)$ is the same with that of white vertices of type $(2,1)$. 
\end{proof}

\begin{proof}[Proof of Lemma \ref{lem3-1a}]
By the same argument as in the proof of Lemma \ref{lem1}, we have the required result. 
\end{proof}

\begin{proof}[Proof of Lemma \ref{lem7}]
Since the 1-handle is trivial, we can regard the core as the cocore, and the cocore as the orientation reversed core; see Fig. \ref{fig25}, see the proof of \cite[Lemma 4.4]{N5} for a more detailed explanation. Thus $h(\sigma_2,  e) \sim h(e, \sigma_2)$, which is the required result.
\end{proof}

\begin{figure}[ht]
\centering
\includegraphics*[height=5cm]{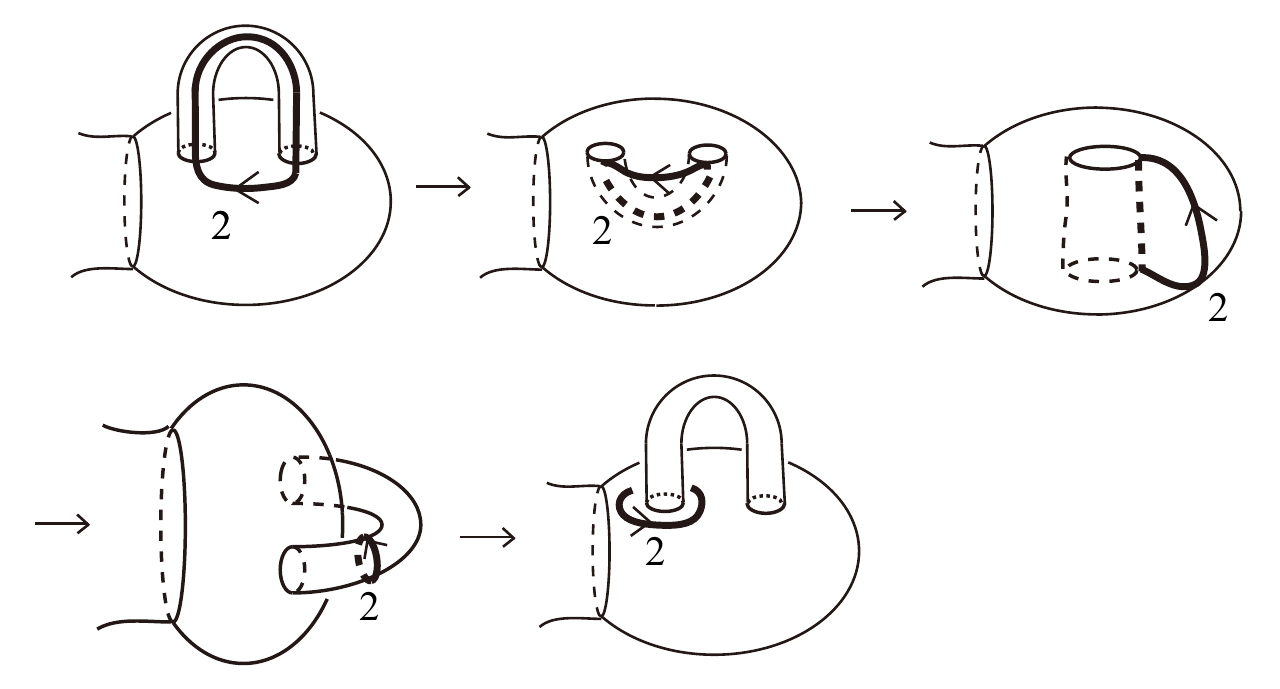}
\caption{For a trivial 1-handle, $h(\sigma_2,e) \sim h(e, \sigma_2)$.}
\label{fig25}
\end{figure}

\begin{proof}[Proof of Lemma \ref{lem5}]
(1) 
We consider the case when $n>0$. The case $n<0$ is shown similarly. Put $h'=h(\sigma_2, e)$. Apply a CI-M2 move to the chart arc of $h'$ and the chart arc of $h=h(\sigma_1, (\sigma_2 \sigma_1 \sigma_1 \sigma_2)^n)$ presenting the first $\sigma_2$ of $(\sigma_2 \sigma_1\sigma_1\sigma_2)^n$, and move the ends of $h'$ and collect all white vertices on $h'$. See Fig. \ref{fig20} for the deformation when we collect the first white vertex on $h'$. See also Lemma \ref{lem3} and Fig. \ref{fig11}. On $h'$, the orientations of the edges parallel to the cocore and around the first white vertex are reversed, and those of the edges around the other white vertices are unchanged. Hence, $h+h'$ deforms to 
\[
h(e, \sigma_1 \sigma_2 (\sigma_2 \sigma_1\sigma_1\sigma_2)^{n-1})+h(\sigma_1, \sigma_2^{-1} \sigma_ 1^{-1} \sigma_1 \sigma_2 (\sigma_2 \sigma_1\sigma_1\sigma_2)^{n-1}).
\]

\begin{figure}[ht]
\centering
\includegraphics*[width=13cm]{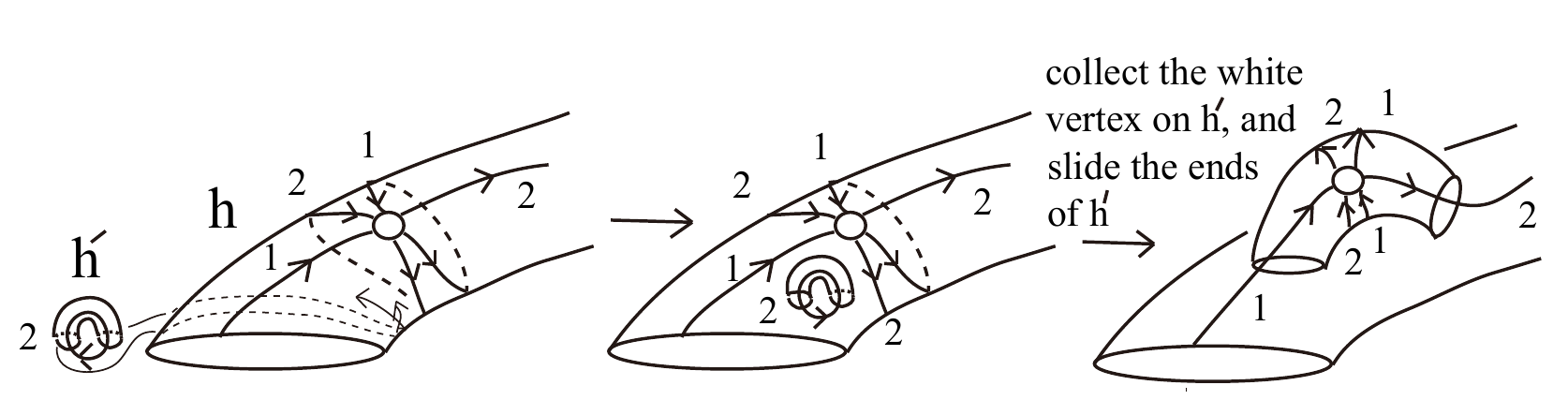}
\caption{Reversal of orientations of the edges around the first  white vertex. }
\label{fig20}
\end{figure}
By CI-M3' and CI-M1 moves as in Fig. \ref{fig12}, or, since $\sigma_2^{-1} \sigma_ 1^{-1} \sigma_1 \sigma_2 (\sigma_2 \sigma_1\sigma_1\sigma_2)^{n-1} \sim  (\sigma_2 \sigma_1\sigma_1\sigma_2)^{n-1}$ and a 1-handle $h(a,b)$ is unique \cite{N}, we have 
\[
h(e, \sigma_1 \sigma_2 (\sigma_2 \sigma_1\sigma_1\sigma_2)^{n-1})+h( \sigma_1, (\sigma_2 \sigma_1\sigma_1\sigma_2)^{n-1}).
\]
By Lemma \ref{lem8}, $h(e, \sigma_1 \sigma_2 (\sigma_2 \sigma_1\sigma_1\sigma_2)^{n-1}) \sim h(e, (\sigma_2 \sigma_1\sigma_1\sigma_2)^{-(n-1)} (\sigma_1 \sigma_2)^{-1})$, and it follows from Lemma \ref{lem9} (\ref{eq5-2}) that we have 
\[
h(e, (\sigma_1 \sigma_2)^{-1})+h(\sigma_1, (\sigma_2 \sigma_1\sigma_1\sigma_2)^{n-1}).
\]
By Lemma \ref{lem8} again, $h(e, (\sigma_1 \sigma_2)^{-1}) \sim h(e, \sigma_1\sigma_2)$; thus we have 
\[
h(e, \sigma_1 \sigma_2)+
h(\sigma_1, (\sigma_2 \sigma_1\sigma_1\sigma_2)^{n-1}).
\]
By Lemma \ref{lem9} (\ref{eq5-3}), we have 
\[
h(e, \sigma_2)+
h(\sigma_1,\ (\sigma_2 \sigma_1\sigma_1\sigma_2)^{n-1}) \sim
h(\sigma_1,\ (\sigma_2 \sigma_1\sigma_1\sigma_2)^{n-1})+h(e, \sigma_2), 
\]
which is the required result.
 
\begin{figure}[ht] 
\includegraphics*[width=13cm]{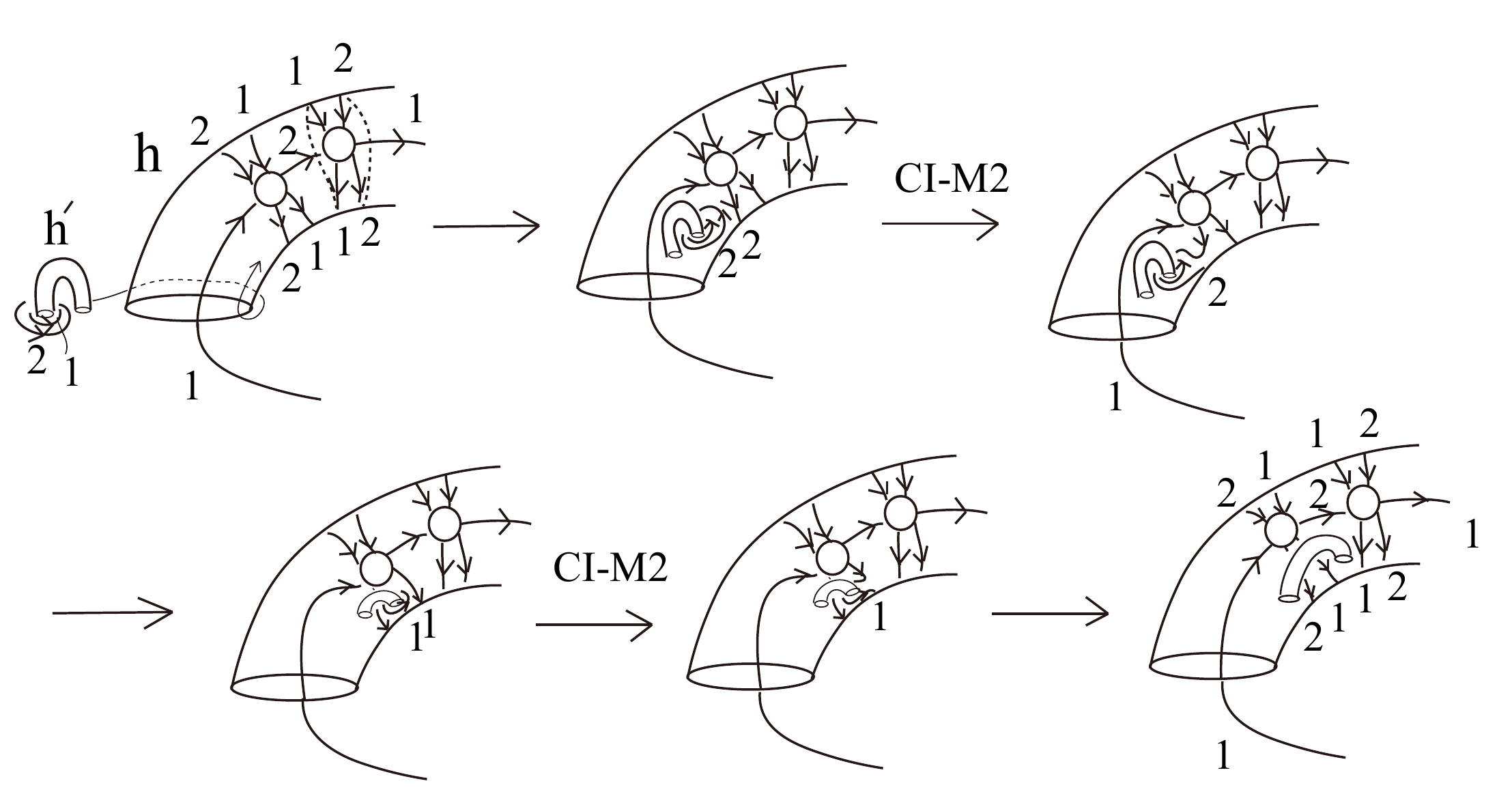}
\caption{Moving the end of the 1-handle $h'$. For simplicity, we omit the labels of several chart edges. }
\label{fig21}
\end{figure}
(2) 
We consider the case when $n>0$. The case $n<0$ is shown similarly. Put $h'=h(e, \sigma_2)$. By Lemma \ref{lem8}, $h' \sim h(e, \sigma_2^{-1})$. Thus $h+h' \sim h'+h$ deforms to  
\[
h(e, \sigma_2^{-1})+h(\sigma_1, (\sigma_2 \sigma_1 \sigma_1 \sigma_2)^n). 
\]
By Lemma \ref{lem9} (\ref{eq5-3}), we have
\[
h(e, \sigma_1^{-1} \sigma_2^{-1})+h(\sigma_1, (\sigma_2 \sigma_1 \sigma_1 \sigma_2)^n). 
\]
 By Lemma \ref{lem8} again, we have 
\[
h(e, \sigma_2 \sigma_1)
+h(\sigma_1, (\sigma_2 \sigma_1 \sigma_1 \sigma_2)^n). 
\]
Applying CI-M2 moves, we deform $h'$ to the form such that $h'$ is equipped with an empty chart and the ends are attached to the places in $h$ as in the last figure of Fig. \ref{fig21}. Let $w_1$ and $w_2$ be the first and the second white vertices associated with the first sequence $\sigma_2 \sigma_1 \sigma_1 \sigma_2$ of $(\sigma_2 \sigma_1 \sigma_1 \sigma_2)^n$. Then, move $w_1$ and $w_2$ on $h'$ and apply a CI-M3'' move to cancel the pair $(w_1, w_2)$. 
Then, on $h'$, there are no white vertices. 

We see the form of the resulting $h'$. 
We denote the initial points and the terminal points of arcs connected to the white vertices $w_1$ and $w_2$ by $p_1, \ldots, p_6$ and $q_1, \ldots, q_6$ as in Fig. \ref{fig22}. See the right figure of Fig. \ref{fig22}. Then, $p_k$ and $q_k$ are connected by a chart arc on $h'$ $(k=1, \ldots,6)$. Further, we take $p_1$ on the initial end of $h$. 
We focus on the edges with the label $1$. We start from $p_1$ and move along the edges. Then we move as follows. We start from $p_1$, we move along the core of $h'$ to $q_1$, along the arc parallel to the cocore of $h$ to $q_5$, along the core of $h'$ to $p_5$, along the arc parallel to the cocore of $h$ to $p_3$, along the core of $h'$ to $q_3$. Thus, the arcs/edges with the label 1 form a connected path $\rho$. Move the terminal end of $h$ along the base arc of $h$ and $\rho$. 
Then, considering a closed path parallel to the cocore of $h$ in a neighborhood of $q_3$ as a new end; see Fig. \ref{fig23}. Then $h$ becomes $h(\sigma_1, (\sigma_2 \sigma_1 \sigma_1 \sigma_2)^{n-1})$. 
The resulting 1-handle $h'$ has only edges with the label 2. 
The 1-handle $h'$ becomes $h(\sigma_2,e)$ by a CI-M1 move and CI-M2 moves as illustrated in Fig. \ref{fig24}. Thus we have the required result.
\end{proof}
 
\begin{figure}[ht] 
\includegraphics*[height=5cm]{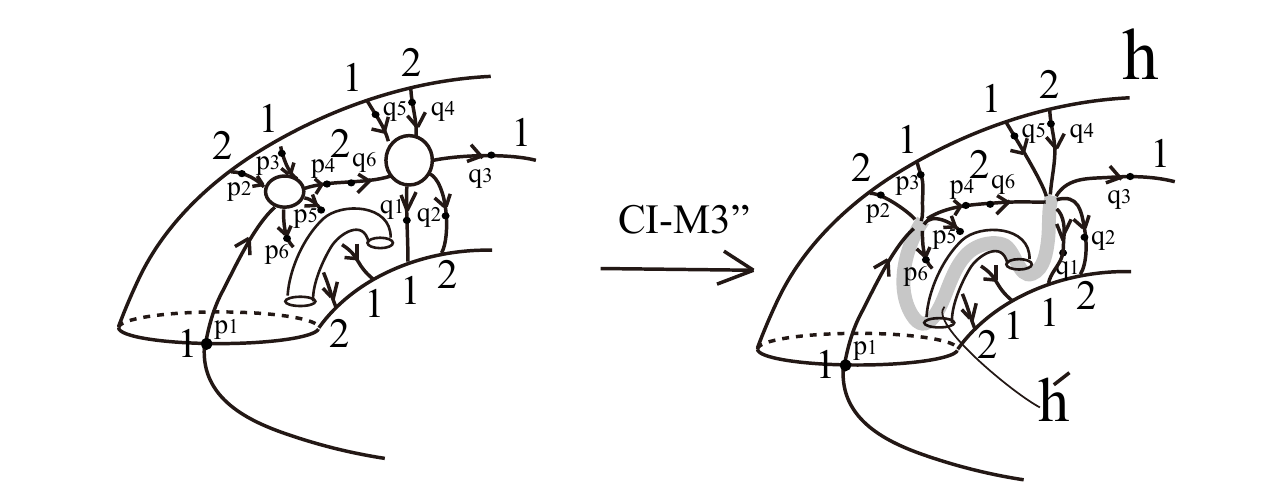}
\caption{Eliminating a pair of white vertices, where in the right figure, we denote the set of parallel chart arcs by a gray ribbon.}
\label{fig22}
\end{figure}

\begin{figure}[ht] 
\includegraphics*[width=13cm]{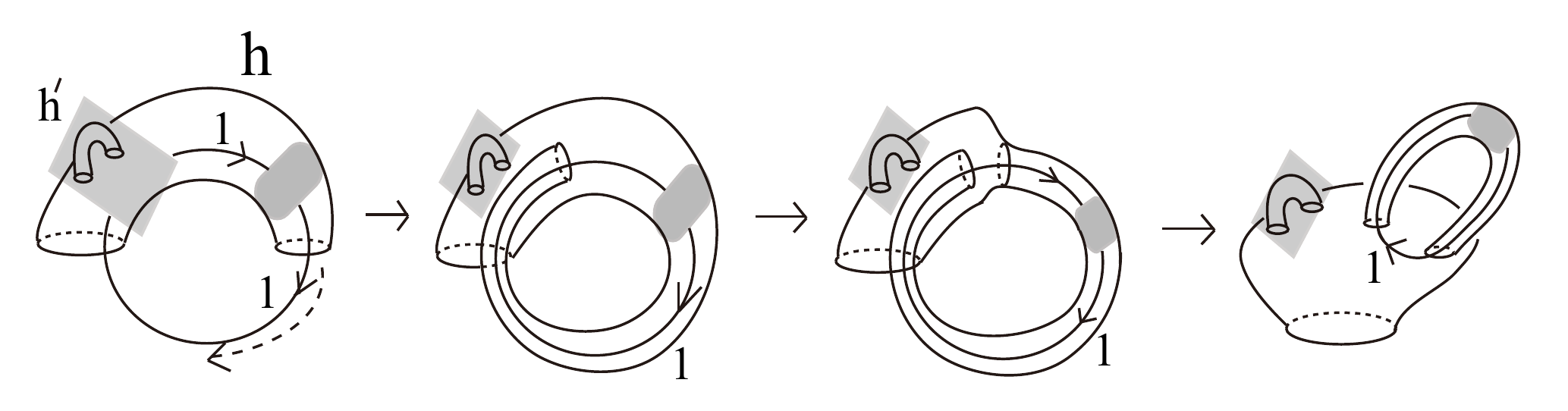}
\caption{Sliding an end of $h$ to obtain a new 1-handle, where there are chart edges and vertices in gray areas.}
\label{fig23}
\end{figure}

\begin{figure}[ht]
\centering
\includegraphics*[width=13cm]{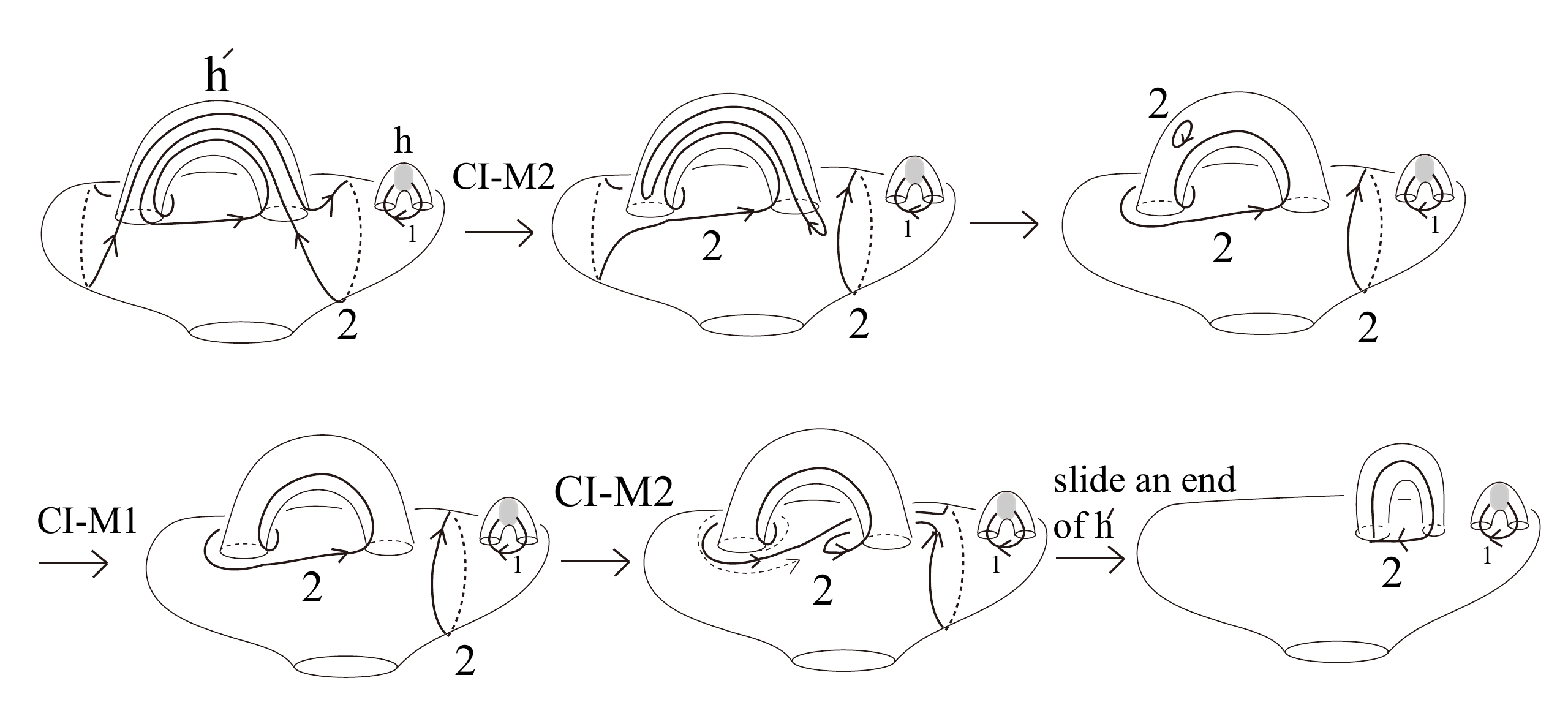}
\caption{The 1-handle $h'$ is equivalent to $h(\sigma_2, e)$. There are chart edges and vertices in gray areas.}
\label{fig24}
\end{figure}

\subsection{Proof of Key Lemma \ref{lem4}}\label{subsec5-2}

\begin{proof}

(1) Let $(F, \Gamma)$ be a branched covering surface-knot of degree 3 such that $\Gamma$ has no black vertices. 
We show that when we attach a 1-handle $h$ to a certain place, we can slide an end of $h$, collecting white vertices including $w$, and the end comes back to a neighborhood of the other end.  

We have a given white vertex $w$. We assume that $w$ is of type $(1,2)$. The other case when $w$ is of type $(2,1)$ is shown similarly. 
We attach a 1-handle $h=h(\sigma_1, e)$ in a neighborhood of a non-middle arc $\alpha_1$ of $w_1=w$ with the label 1. We apply a CI-M2 move and slide an end of $h$ and collect $w$ on $h$. 
Let $\alpha_2$ be the chart edge which comes out from the sliding end of $h$. Then we have two cases: (Case 1) The edge $\alpha_2$ is connected at the terminal point with a white vertex $w_2$ as a non-middle arc, and (Case 2) The edge $\alpha_2$ is connected at the terminal point with a white vertex $w_2$ as a middle arc.

(Case 1) 
Assume that $\alpha_2$ is connected at the terminal point with a white vertex $w_2$ as a non-middle arc. Then, we collect $w_2$ on $h$, and we regard the resulting edge coming out from $h$ (the diagonal edge of $\alpha_2$ with respect to $w_2$) the new $\alpha_2$. 

(Case 2) 
Assume that the chart edge $\alpha_2$ is connected at the terminal point  with a white vertex $w_2$ as a middle arc. Let $i$ $(i\in \{1,2\})$ be the label of $\alpha_2$. Then, $w_2$ is of type $(j,i)$ $(\{j\}=\{1,2\}\backslash \{i\})$. 
By equivalence deformation as in Fig. \ref{fig26}, 
we can connect the end of $h$ with an arc of any edge $\alpha_3$ with the label $i$ which admits a path from $\alpha_2$ to $\alpha_3$ intersecting with no chart edges nor vertices. Since $\alpha_2$ is a middle arc with respect to $w_2$, the two edges sandwiching $\alpha_2$ are distinct edges; thus there exists at least one edge with the label $i$ which admits such a path. 

Let $X$ be the set of edges with the label $i$ such that each element has a path from $\alpha_2$ to itself, intersecting with no chart edges nor vertices. We assume that the elements of $X$ are not contained in the 1-handle $h$. By the above argument, $X$ contains edges other than $\alpha_2$. 

Suppose that $X$ contains an edge $\alpha_3\neq \alpha_2$ which is connected at the terminal point with a white vertex $w_3$ of type $(i,j)$. 
Then, applying the deformation illustrated in Fig. \ref{fig26} to $\alpha_2$ and $\alpha_3$, we connect $w_1$ and $w_3$ by an edge. 
The arcs with the label $i$ of a white vertex of type $(i,j)$ with the orientation toward the vertex are non-middle arcs.  
Hence, we slide the end of $h$, and we collect $w_3$ on $h$. 
 
Suppose that each element of $X$ is an edge connected at the terminal point with a white vertex of type $(j,i)$. 
Since $\alpha_2$ comes out of the sliding end of the 1-handle $h$,
by Lemma \ref{lem5-3}, we take inductively a sequence of edges $\alpha_2, \beta_2, \alpha_3, \beta_3, \alpha_4, \beta_4, \ldots$ where $\alpha_k \in X$ and $\beta_k$ is an edge with the label $j$ $(k=2,3,\ldots)$ such that the closure of the union of the edges forms a connected path $\rho$, and the edges are mutually distinct, until $\alpha_n$ or $\beta_n$ goes into or comes out of the fixed end of $h$ for some $n \geq 3$. We give $\rho$ an orientation induced from that of $\alpha_2$. 
The edge going into or coming out of the fixed end of $h$ is $\alpha_1$, which is an edge with the label $1$ and oriented coherently with that of $\rho$. By Lemma \ref{lem5-3}, $\alpha_k$ is oriented coherently with that of $\rho$, and $\beta_k$ is oriented oppositely to that of $\rho$; thus, $\beta_n$ cannot be $\alpha_1$ and we see that $i=1$ and $\alpha_n=\alpha_1$ for some $n$ $(n\geq 3)$. 
We apply a CI-M2 move between $\alpha_2$ and $\alpha_n$. Then, the sliding end of $h$ is back to a neighborhood of the other end, which is the required result. 

Repeating these processes, we have the required result.

(2) Let $(F, \Gamma)$ be a branched covering surface-knot such that $\Gamma$ has a positive number of black vertices. We take an edge $\alpha_2$ which is connected at the initial point with a black vertex. Remark that counting the numbers of edges connected with black vertices at the initial points and the terminal points, we see that half of the black vertices are connected at the initial points of edges; thus, if there are black vertices, then there exists a black vertex connected at the initial point of an edge. Then, by the same argument as in the proof of (1), 
instead of collecting white vertices, we apply CIII-moves and we move the black vertex, as follows. If $\alpha_2$ is connected at the terminal point with another black vertex, then we have a free edge. If $\alpha_2$ is connected at the terminal point with a white vertex $w_2$ as a non-middle arc, then, applying a CIII-move, we eliminate $w_2$, and we regard the resulting edge connected with the black vertex at the initial point as the new $\alpha_2$. Suppose that $\alpha_2$ is connected at the terminal point with a white vertex $w_2$ as a middle arc. Let $i$ $(i\in \{1,2\})$ be the label of $\alpha_2$. Then, $w_2$ is of type $(j,i)$ $(\{j\}=\{1,2\}\backslash \{i\})$. Because $\alpha_2$ is connected with a black vertex, we cannot take a sequence of edges including $\alpha_2$ such that the closure of the union of the edges forms a closed path; hence, Lemma \ref{lem5-3} implies that there exists an edge $\alpha_3$ with the label $i$ which is connected at the terminal point with a white vertex $w_3$ of type $(i,j)$ or a black vertex. 
Applying a CI-M2 move to $\alpha_2$ and $\alpha_3$, we have one of the two cases: (Case 1) The black vertex and the white vertex $w_3$ of type $(i,j)$ are connected by an edge oriented toward $w_3$ and with the label $i$, or (Case 2) The black vertex is connected with another black vertex by an edge with the label $i$. In Case 1, we eliminate $w_3$ by a CI-M3 move. In Case 2, we have a free edge. Repeating these processes, every black vertex becomes an endpoint of a free edge, and we have the required result. 
\end{proof}

\begin{figure}[ht]
\centering
\includegraphics*[width=12cm]{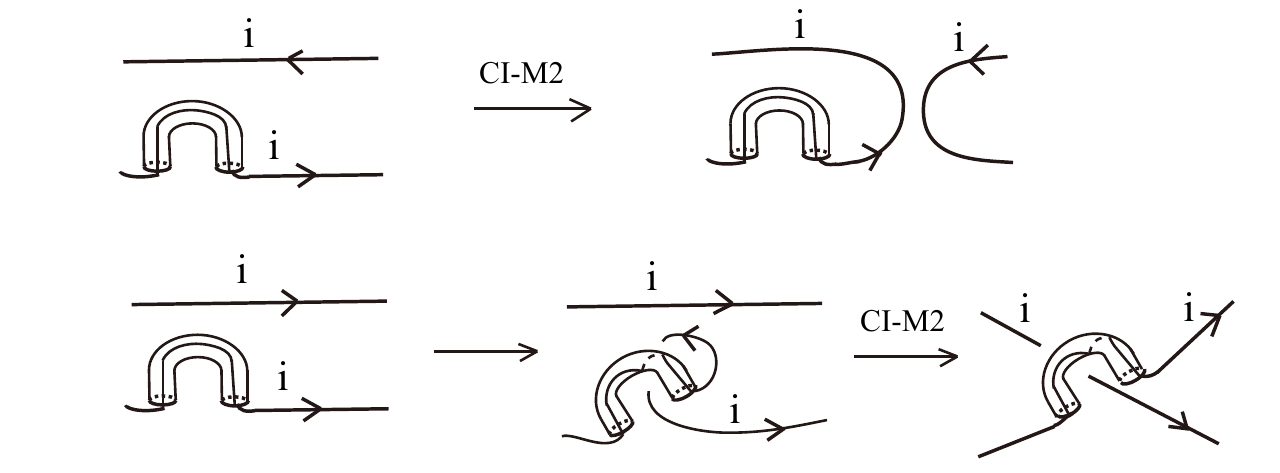}
\caption{An end of a 1-handle with a chart arc along the core,  can connect with another arc with the same label with any orientation. The first row  (respectively, the second row) illustrates the case when the orientations of the arcs are opposite (respectively, coincide).}
\label{fig26}
 \end{figure}

\begin{lemma}\label{lem5-3}
Let $(F, \Gamma)$ be a branched covering surface-knot of degree 3. If necessary, we regard another chart as $\Gamma$ which is C-move equivalent to $\Gamma$ and has a smaller number of white vertices. 
Let $\alpha_2$ be an edge with the label $i$ connected at the terminal point with a white vertex of type $(j,i)$ $(\{i,j\}=\{1,2\})$. 
Let $X$ be the set of edges with the label $i$ such that each element admits a path from $\alpha_2$ to itself, intersecting with no edges nor vertices.  
Assume that each element of $X$ is an edge connected at the terminal point with a white vertex of type $(j,i)$. 
Then, we can take inductively a sequence of edges $\alpha_2, \beta_2, \alpha_3,\beta_3, \alpha_4, \beta_4, \ldots$ (until we have $\alpha_n=\alpha_2$ for some $n\geq 3$) satisfying the following conditions. 

\begin{enumerate}[$(1)$]
\item
The edge $\alpha_k$ is an element of $X$ and $\beta_k$ is an edge with the label $j$ $(k=2,3,\ldots)$. 

\item
The closure of the union of $\alpha_2, \beta_2, \alpha_3,\beta_3, \alpha_4, \beta_4, \ldots$ form a connected path. 

\item
The pair of edges $\alpha_k$ and $\beta_k$ (respectively, $\beta_k$ and $\alpha_{k+1}$) are both connected with the same vertex $w_k$ (respectively, $v_k$) at the terminal points (respectively, the initial points).

\item
The edges $\alpha_2, \beta_2, \alpha_3,\beta_3, \alpha_4, \beta_4, \ldots$ are mutually distinct. 
\end{enumerate}
\end{lemma}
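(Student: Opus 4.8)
The plan is to trace a deterministic walk through the chart that runs alternately \emph{forward} along label-$i$ edges (the $\alpha_k$) and \emph{backward} along label-$j$ edges (the $\beta_k$), turning at the white vertices, and to show that under the standing hypothesis this walk closes up into a simple cycle through $\alpha_2$. First I would record the local model at a white vertex $w$ of type $(j,i)$. Since the three arcs oriented toward $w$ carry the labels $j,i,j$ in rotational order, with the single label-$i$ arc in the middle (cf.\ Figs.\ \ref{fig4} and \ref{fig14}), any label-$i$ edge terminating at $w$ is forced to be the middle arc, and the two arcs flanking it are the non-middle label-$j$ arcs oriented toward $w$; dually, the three arcs oriented away from $w$ carry labels $i,j,i$ with the label-$j$ arc in the middle. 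Thus $\alpha_k$ (label $i$, into $w_k$) sits between two label-$j$ arcs that also terminate at $w_k$, and I take $\beta_k$ to be the one bordering the face of $F\setminus\Gamma$ occupied by the sliding end of $h$. This yields condition $(1)$ and the $w_k$-half of $(3)$; and since $\alpha_k$ is traversed into $w_k$ while $\beta_k$ is traversed from $w_k$ back to its initial point $v_k$, it gives the orientation claim that $\alpha_k$ is coherent and $\beta_k$ opposite to the induced orientation of $\rho$.

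Next I would verify that the walk can always be continued, i.e.\ that from $v_k$ there is a label-$i$ edge $\alpha_{k+1}$ leaving $v_k$ and bordering the same face as $\beta_k$. Here $\beta_k$ is oriented away from $v_k$, hence is one of the out-arcs of $v_k$, and a short case check on the type of $v_k$ suffices: if $v_k$ has type $(j,i)$ then $\beta_k$ is the middle out-arc and is flanked on both sides by label-$i$ out-arcs, whereas if $v_k$ has type $(i,j)$ then $\beta_k$ is a non-middle out-arc adjacent to the unique middle label-$i$ out-arc. In either case an adjacent label-$i$ out-arc exists; taking the one on the side of the traced face as $\alpha_{k+1}$ gives condition $(1)$, the $v_k$-half of $(3)$, and keeps $\alpha_{k+1}$ in the region reachable from $\alpha_2$, so $\alpha_{k+1}\in X$. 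By the standing hypothesis every element of $X$ terminates at a white vertex of type $(j,i)$, so $\alpha_{k+1}$ again meets the hypothesis of the previous paragraph and the induction proceeds; the global balance supplied by Lemma \ref{lem5-1} guarantees the type counts never obstruct this. As there are only finitely many edges, the sequence must eventually repeat an edge.

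The main obstacle is to upgrade ``eventually repeats'' into conditions $(2)$ and $(4)$, that the edges are mutually distinct and the walk returns precisely to $\alpha_2$ with $\alpha_n=\alpha_2$, so that the closure of $\alpha_2,\beta_2,\alpha_3,\beta_3,\dots$ is a simple connected path. The decisive point is that the turning rule is reversible: once the side (the face occupied by the sliding end) is fixed, the datum ``incoming edge together with its bordering face'' determines the outgoing edge at each vertex, and conversely. Hence the assignment $\alpha_k\mapsto\alpha_{k+1}$ is an injective self-map of the finite set $X$, the $\alpha_k$ lie on a single orbit, the walk is a cycle, and starting at $\alpha_2$ it returns to $\alpha_2$ without repetition, giving $(2)$ and $(4)$.

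The residual difficulty is purely local degeneracy: when two consecutive white vertices produced by the walk are joined so that $\rho$ fails to be embedded, or when a turn would force $\beta_k$ to coincide with $\alpha_2$ prematurely. In such a configuration the offending adjacent pair of white vertices admits a C-move --- a CI-M1 move, or a CI-M3$'$/CI-M3$''$ move as in Fig.\ \ref{fig12} --- that cancels them and strictly decreases the number of white vertices. Replacing $\Gamma$ by this C-move equivalent chart, exactly as the statement permits, and inducting on the number of white vertices removes the degeneracy. This reduction step, rather than the existence of the continuation or the orientation bookkeeping, is the part I expect to require the most care.
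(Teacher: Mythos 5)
Your overall strategy is the paper's: trace a deterministic alternating walk, forward along label-$i$ edges and backward along label-$j$ edges, turning to the adjacent arc at each white vertex, and your local analysis of a type-$(j,i)$ vertex (the incoming label-$i$ arc is the middle arc, flanked by the two label-$j$ in-arcs) matches Fig.~\ref{fig27}. Your argument for condition $(4)$ via reversibility of the turning rule is genuinely different from the paper's, which instead observes that $\alpha_k,\beta_k$ both point \emph{into} $w_k$ and $\beta_k,\alpha_{k+1}$ both point \emph{out of} $v_k$, so a vertex can recur only in the configuration of Fig.~\ref{fig28}; your face-tracing injectivity argument is arguably cleaner, but it only works once the walk is known to be an honest face-boundary walk through vertices with the expected local pictures, which is exactly what you have not secured.

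The concrete gaps are at the continuation step. At $v_k$ of type $(i,j)$, the cyclic order of arcs is $i_{\mathrm{in}}, j_{\mathrm{in}}, i_{\mathrm{in}}, j_{\mathrm{out}}, i_{\mathrm{out}}, j_{\mathrm{out}}$, so a non-middle out-arc $\beta_k$ is adjacent to the middle label-$i$ out-arc on one side but to a label-$i$ \emph{in}-arc on the other; your claim that ``an adjacent label-$i$ out-arc exists'' on the side of the traced face is therefore false in general. The paper resolves the two subcases separately: if the traced-side arc points into $v_k$, it is an element of $X$ terminating at a type-$(i,j)$ vertex, contradicting the standing hypothesis (this is the actual use of that hypothesis, which your write-up never deploys for this purpose); if $\alpha_{k+1}$ does point out of $v_k$ but $v_k$ still has type $(i,j)$, the pair $(v_k,w_k)$ is cancelled by a CI-M3$'$ move (Fig.~\ref{fig6}), using the ``replace $\Gamma$ by an equivalent chart with fewer white vertices'' clause. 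Your appeal to Lemma~\ref{lem5-1} here is a non sequitur --- a global count of vertex types cannot control which arc borders a particular face. Separately, the lemma does not exclude black vertices, and you never address the possibility that $\beta_k$ or $\alpha_{k+1}$ ends at one; the paper shows such an edge is then a non-middle arc of a type-$(j,i)$ vertex and removes the obstruction by a CIII-move. These reductions are precisely the ``residual difficulty'' you defer to your last paragraph, but they are the substance of the proof rather than a peripheral degeneracy, and the moves you name there (CI-M1, Fig.~\ref{fig12}) are not the ones that do the work.
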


\begin{proof}
We take the sequence of edges $\beta_2, \alpha_3,\beta_3, \alpha_4, \beta_4, \ldots$ 
as follows. First we suppose that each edge is connected with a white vertex at each endpoint. We show later that we can assume this situation. 

Let $w_2$ be the white vertex connected with $\alpha_2$ at the terminal point. Let $\beta_2$ be the adjacent edge of $\alpha_2$ with respect to $w_2$ in the anti-clockwise direction. 
Let $v_2$ be the white vertex at the other endpoint of $\beta_2$. Let $\alpha_3$ be the adjacent edge of $\beta_2$ with respect to $v_2$ in the anti-clockwise direction. By the similar method, we take a sequence of edges $\alpha_2, \beta_2, \alpha_3,\beta_3, \ldots$ $(\alpha_k \in X, \beta_k \in Y, k=2,3,\ldots)$ satisfying the conditions (1) and (2), and a sequence of white vertices $w_2, v_2, w_3, v_3, \ldots$; see Fig. \ref{fig27}, where we don't know the orientations yet. 
 
Now we see the orientations of edges. See Fig. \ref{fig27}. 
By assumption, the edge $\alpha_2$ is with the label $i$ and oriented toward $w_2$. Since $w_2$ is of type $(j,i)$, $\beta_2$ is oriented toward $w_2$. If $\alpha_3$ is oriented toward $v_2$, then $v_2$ is of type $(i,j)$ and $\alpha_3$ is connected with the white vertex $v_2$ of type $(i,j)$ at the terminal point, which implies $\alpha_3 \notin X$, hence a contradiction. Thus we see that $\alpha_3$ is oriented toward $w_3$. Similarly, if $\beta_3$ is oriented toward $v_3$, then $w_3$ is of type $(i,j)$, and $\alpha_3$ is connected with the white vertex $w_3$ of type $(i,j)$ at the terminal point, hence a contradiction. Thus we see that $\beta_3$ is oriented toward $w_3$. By the same argument, we see that $\beta_k$ is an edge with the label $j$ which is oriented from $v_{k}$ to $w_{k}$, and $\alpha_{k+1}$ is an edge with the label $i$ which is oriented from $v_{k}$ to $w_{k+1}$ $(k=2,3, \ldots)$. 
Thus the sequence satisfies the condition (3). 

Since $w_k$ is connected with $\alpha_k$ at the terminal point, by assumption, $w_k$ is of type $(j,i)$ $(k=2,3,\ldots)$. 
We see that $v_k$ is also of type $(j,i)$ $(k=2,3,\ldots)$: If $v_k$ for some $k$ is of type $(i,j)$, then, a by CI-M3' move as in the top right figure of Fig. \ref{fig6}, we eliminate $(v_k, w_k)$, and we regard the resulting chart as the new $\Gamma$. Thus, we have $\Gamma$ such that $w_2, v_2, w_3, v_3, \ldots$ are all of type $(j,i)$. 

Now, we show that each edge of $\beta_2, \alpha_3,\beta_3, \alpha_4, \beta_4, \ldots$ is connected with a white vertex at each endpoint. 
Suppose that $\beta_k$ (respectively, $\alpha_{k+1}$) $(k=2,3,\ldots)$ is connected at the initial point (respectively, the terminal point) with a black vertex. Then, since the white vertices are all of type $(j,i)$, the edge $\beta_k$ (respectively, $\alpha_{k+1}$) is a non-middle edge of the white vertex $w_k$ (respectively, $v_{k}$). Hence, if such a situation occurs, then, by a CIII-move we eliminate the white vertex $w_k$ (respectively, $v_{k}$) and we regard the resulting chart as the new $\Gamma$. Thus we see the existence of the sequence satisfying the conditions (1), (2) and (3). 

Around a white vertex $w_k$ (respectively, $v_{k}$), $\alpha_k$ and $\beta_k$ (respectively, $\beta_k$ and $\alpha_{k+1})$ are both oriented toward $w_k$ (respectively, from $v_{k}$). Hence, if there appears the same white vertex in the sequence more than once (where we exclude the case $\alpha_n=\alpha_2$ for some $n$), then it is only by the form as in Fig. \ref{fig28}, and we see that  the edges $\beta_2, \alpha_3,\beta_3, \alpha_4, \beta_4, \ldots$ are mutually distinct. Thus the sequence satisfies the condition (4), and we have the required result. 
\end{proof}

\begin{figure}[ht]
\centering
\includegraphics*[height=3cm]{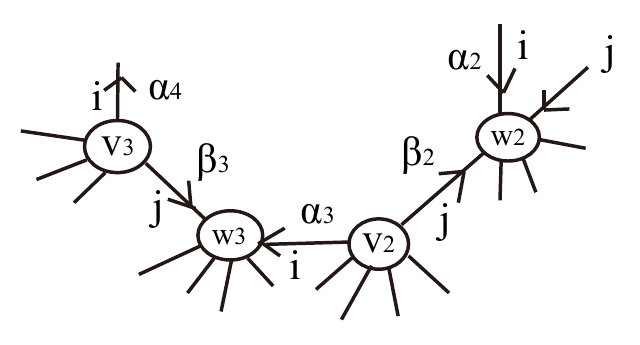}
\caption{The edges $\alpha_2, \beta_2, \alpha_3, \beta_3, \alpha_4$ and the white vertices $w_2, v_2, w_3, v_3$, where $\{i,j\}=\{1,2\}$. We omit the orientations and labels of some edges.}
\label{fig27}
 \end{figure}

\begin{figure}[ht]
\centering
\includegraphics*[height=2.5cm]{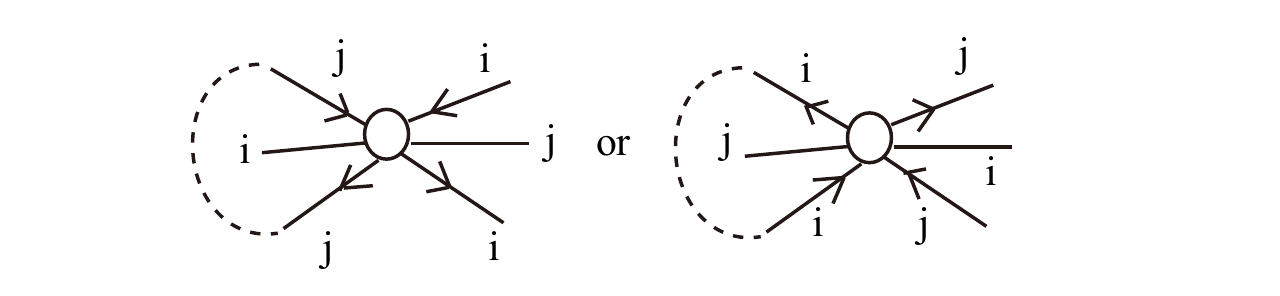}
\caption{The form of a white vertex when it appears more than once in the sequence $w_2, v_2, w_3, v_3, \ldots$, where $\{i,j\}=\{1,2\}$ and we omit orientations of some edges. }
\label{fig28}
\end{figure}

\section*{Acknowledgements}
The author was partially supported by JSPS KAKENHI Grant Number 15K17532.

\end{document}